\title[Geometry on Kodaira surfaces]%
{Exotic geometric structures on Kodaira surfaces}
\date{\today}
\author{Benjamin McKay}
\address{School of Mathematical Sciences, 
University College Cork, Cork, Ireland}
\email{b.mckay@ucc.ie}
\thanks{It is a pleasure to thank
Sorin Dumitrescu for helpful conversations
on the problems solved in this paper,
and for his invitation to work with him at the
Laboratoire de Math{\'e}matiques J.A. Dieudonn{\'e} 
of the University of Nice Sophia--Antipolis
where this paper was written.}
\keywords{locally homogeneous structure, complex surface}
\date{\today}
\newtheorem{theorem}{Theorem}
\newtheorem{lemma}{Lemma}
\newtheorem{proposition}{Proposition}
\theoremstyle{remark}
\newtheorem{definition}{Definition}
\newtheorem{example}{Example}
\newtheorem{remark}{Remark}
\newcommand{\C}[1]{\ensuremath{\mathbb{C}^{#1}}}
\newcommand{\R}[1]{\ensuremath{\mathbb{R}^{#1}}}
\newcommand{\Z}[1]{\ensuremath{\mathbb{Z}^{#1}}}
\newcommand{\UpperHalfPlane}{\ensuremath{\mathbb{H}}}
\newcommand{\OO}[1]{%
  \ensuremath{%
    \mathcal{O}%
    \ifthenelse{\equal{#1}{0}}%
      {}%
      {\left({#1}\right)}%
  }%
}%
\newcommand{\OOp}[2]{
  \ensuremath{
    \mathcal{O}
    \ifthenelse{\equal{#1}{0}}
      {}
      {\left({#1}\right)}
    \ifthenelse{\equal{#2}{1}}
      {}
      {^{\oplus{#2}}}
  }
}
\newcommand{\Proj}[1]{\ensuremath{\mathbb{P}^{#1}}}
\newcommand{\LieSL}[1]{\ensuremath{\mathfrak{sl}\left(#1\right)}}
\newcommand{\PSL}[1]{\ensuremath{\mathbb{P}\operatorname{SL}\left(#1\right)}}
\newcommand{\Presentation}[2]{\Braket{#1|#2}}
\newcommand{\Cohom}[2]{\ensuremath{H^{#1}\left({#2}\right)}}
\DeclareMathOperator{\Ad}{Ad}
\newcommand{\pd}[2]{\frac{\partial #1}{\partial #2}}
\newcommand{\map}[3][:]%
{\ensuremath{\ifthenelse{\equal{#1}{:}}{}{{#1} \colon}{#2} \to {#3}}}
\newcommand{\mapto}[3][:]%
{\ensuremath{\ifthenelse{\equal{#1}{:}}{}{{#1} \colon}{#2} \mapsto {#3}}}
\newcommand{\homotopygp}[2]%
{\ensuremath{\pi_{#1}\left({#2}\right)}}
\newcommand{\fundgp}[1]%
{\ensuremath{\homotopygp{1}{#1}}}
\newcommand{\Lie}[1]{\ensuremath{\mathfrak{#1}}}
\newcommand{\LieG}{\Lie{g}}
\newcommand{\LieH}{\Lie{h}}
\newcommand{\disk}{\mathbb{D}}
\newcommand{\VCommutativeDiagram}[6]%
{%
\begin{tikzpicture}[description/.style={fill=white,inner sep=2pt},%
>=angle 90,
baseline=(current bounding box.center)]
\matrix (m) [matrix of math nodes, 
ampersand replacement=\&,
row sep=3em,
column sep=2.5em, text height=1.5ex, text depth=0.25ex]
{ #1 \& \& #2 \\
\& #3 \& \\ };
\path[->,font=\scriptsize]
(m-1-1) edge node[auto] {$ #4 $} (m-1-3)
edge node[auto] {$ #6 $} (m-2-2)
(m-1-3) edge node[auto] {$ #5 $} (m-2-2);
\end{tikzpicture}
}%
\newcommand{\BoxCommutativeDiagram}[8]%
{%
\begin{tikzpicture}[>=angle 90,baseline=(current bounding box.center)]
\matrix(m)[matrix of math nodes,
row sep=2.6em, column sep=2.8em,
ampersand replacement=\&,
text height=1.5ex, text depth=0.25ex]
{#1 \& #2 \\
#3 \& #4\\};
\path[->,font=\scriptsize,>=angle 90]
(m-1-1) edge node[auto] {$#5$} (m-1-2)
edge node[auto] {$#6$} (m-2-1)
(m-1-2) edge node[auto] {$#7$} (m-2-2)
(m-2-1) edge node[auto] {$#8$} (m-2-2);
\end{tikzpicture}%
}%
\newcommand{\BundleCommutativeDiagram}[5]%
{%
\begin{tikzpicture}[>=angle 90,baseline=(current bounding box.center)]
\matrix(m)[matrix of math nodes,
row sep=2.6em, column sep=2.8em,
ampersand replacement=\&,
text height=1.5ex, text depth=0.25ex]
{#1 \& #2 \\
 \& #3\\};
\path[->,font=\scriptsize,>=angle 90]
(m-1-1) edge node[auto] {$#4$} (m-1-2)
(m-1-2) edge node[auto] {$#5$} (m-2-2);
\end{tikzpicture}%
}%
\newcommand{\RightBundleCommutativeDiagram}[5]%
{%
\begin{tikzpicture}[>=angle 90,baseline=(current bounding box.center)]
\matrix(m)[matrix of math nodes,
row sep=2.6em, column sep=2.8em,
ampersand replacement=\&,
text height=1.5ex, text depth=0.25ex]
{#2 \& #1 \\
 #3 \& \\};
\path[->,font=\scriptsize,>=angle 90]
(m-1-2) edge node[auto] {$#4$} (m-1-1)
(m-1-1) edge node[auto] {$#5$} (m-2-1);
\end{tikzpicture}%
}%
\newcommand{\ShortExactSequence}[5]
{%
\begin{tikzpicture}[start chain,baseline=(current bounding box.center)] {
    \node[on chain] {$0$};
    \node[on chain] {$#1$} ;
    \node[on chain, join={node[above]
          {$\scriptstyle{#4}$}}] {$#2$};
    \node[on chain, join={node[above]
          {$\scriptstyle{#5}$}}] {$#3$};
    \node[on chain] {$0$}; }
\end{tikzpicture}
}%
\newcommand{\otoprule}%
{\midrule[\heavyrulewidth]\addlinespace[5pt]}
\newcommand{\Aff}[1]%
{
\ensuremath{\operatorname{Aff}\left({#1}\right)}
}
\newcounter{remarkCounter}
\begin{document}

\begin{abstract}   
On all compact complex surfaces (modulo finite
unramified coverings), we classify all of the locally homogeneous 
geometric structures which are locally isomorphic to the
``exotic'' homogeneous surfaces of Lie.
\end{abstract}

\maketitle
\tableofcontents

\section{Introduction}

In Lie's classification of Lie group
actions on surfaces, there are two exotic
cases, in which the definition of the 
Lie group depends not
only on parameters, but on the 
set of solutions of a differential equation
\cite{Lie:GA:5} p .767--773, \cite{Mostow:1950}. We will classify,
on all compact complex surfaces, all of
the locally homogeneous structures
which are locally isomorphic to these
exotic surfaces of Lie.
This paper is part of a larger
programme to classify holomorphic 
locally homogeneous structures
on low dimensional compact 
complex manifolds,
in joint work with Sorin Dumitrescu and Alexey
Pokrovskiy \cite{McKay:2011,McKay/Pokrovskiy:2010}.

\begin{theorem}\label{1.5} 
Let $S$ be a compact complex surface. Suppose that
$S$ has a holomorphic 
locally homogeneous structure modelled on 
one of the exotic surfaces of Lie (defined precisely
below). Then,  up to replacing $S$ by a finite 
unramified covering
space, $S$ is a complex torus or primary Kodaira
surface. Every complex torus and every primary
Kodaira surface admits such structures. (We write
out these structures explicitly
in sections~\ref{section:GDtori}, \ref{section:GDprimeOntori},
\ref{section:GDonKodaira} and \ref{section:GDprimeOnKodaira}).
On any complex torus, all
holomorphic locally homogeneous 
structures modelled on Lie's exotic surfaces 
are induced by the translation structure.
(See section~\ref{subsection:Inducing} for this terminology.)

Let $G_0$ be the group of complex affine transformations
of $\C{2}$ of the form
\[
\begin{pmatrix}
z \\
w
\end{pmatrix}
\mapsto
\begin{pmatrix}
z + b \\
w + az + c
\end{pmatrix}.
\]
On any primary Kodaira surface, all holomorphic
locally homogeneous
structures modelled on Lie's exotic surfaces
are induced from a certain
holomorphic locally homogeneous structure
modelled on the $G_0$-action on $\C{2}$
which we will write out explicitly in 
section~\vref{section:GDonKodaira}.
\end{theorem}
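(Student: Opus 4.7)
The plan is to work in the Cartan-geometric / $(G,X)$-structure framework: a holomorphic locally homogeneous structure modelled on an exotic Lie surface $X=G/H$ yields a holonomy representation $\fundgp{S}\to G$ together with a holomorphic developing map $\tilde S\to X$ equivariant under it, and a sheaf of local holomorphic Killing fields on $S$ that is a local system of Lie algebras isomorphic to $\LieG$. My strategy is to extract from this data enough holomorphic tensorial invariants on $S$ to force $S$ into a very short list of Kodaira--Enriques types, and then on each surviving type to reconstruct the structure explicitly.

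The first step would be to inspect the exotic groups of Lie, and in particular $G_0$, for the invariant tensors they carry on $X=\C{2}$. The linearization of an element of $G_0$ is the unipotent matrix $\begin{pmatrix}1&0\\a&1\end{pmatrix}$, so $G_0$ preserves the holomorphic volume form $dz\wedge dw$ and the holomorphic foliation $\{z=\text{const}\}$; the second exotic family adds a one-parameter deformation of this picture. Pulling these invariants back via the developing map equips $S$ with a nowhere-zero holomorphic $2$-form and a holomorphic foliation. In particular the canonical bundle $K_S$ is trivial, which by Kodaira--Enriques restricts $S$ (up to finite unramified cover) to a torus, a K3 surface, or a primary Kodaira surface.

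Next, I would eliminate K3 surfaces by a developing-map argument: a K3 is simply connected, so the developing map would be a globally defined holomorphic local diffeomorphism from a compact surface to $X\cong\C{2}$, whose image would be a compact open subset of $\C{2}$, impossible. This leaves tori and primary Kodaira surfaces, matching the statement. The isotropy $H$ of $G_0$ at the origin is one-dimensional abelian, and the residual freedom in specifying a $G_0$-structure is parametrized by the choice of a holomorphic coframe for the tangent bundle compatible with the invariants; on a complex torus this comparison with the translation structure is straightforward because $TS$ is holomorphically trivial and the invariant foliation must come from a linear foliation, so one reduces to classifying pairs (linear foliation, compatible translation-invariant coframe), which is exactly what the translation structure provides. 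Sections \ref{section:GDtori}--\ref{section:GDprimeOntori} would be invoked here to match the parametrization.

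The hard part, and the main obstacle, is the primary Kodaira case. Here $S$ is a principal elliptic bundle over an elliptic curve with nontrivial Chern class, and $TS$ is not trivial, though the holomorphic tangent bundle has a canonical filtration by the vertical foliation of the bundle projection. I would identify the invariant $G_0$-foliation on $S$ with the vertical foliation of this elliptic fibration (no other holomorphic foliations are available, by a standard result on Kodaira surfaces), and then use the existence of a global frame of invariant vector fields coming from the translations of the universal cover together with the $G_0$-invariants to write the structure explicitly as the one constructed in section \ref{section:GDonKodaira}. For the second exotic family one repeats the argument, obtaining the induced structure of section \ref{section:GDprimeOnKodaira}. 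The classification then concludes by showing that any other $(G,X)$-structure with the same underlying holomorphic invariants differs from the model one by an element of the normalizer of the holonomy, which is absorbed into the definition of ``induced from'' in section \ref{subsection:Inducing}.
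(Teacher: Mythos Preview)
Your argument has a genuine gap in the treatment of the second exotic family $G'_D$. You assert that pulling back invariants gives a nowhere-zero holomorphic $2$-form on $S$, hence $K_S$ trivial; but $G'_D$ does \emph{not} preserve $dz\wedge dw$. An element $(t,\mu,f)\in G'_D$ rescales it by $\mu\in\C{\times}$, so a $(G'_D,\C{2})$-structure only yields a flat holomorphic connection on $K_S$, not a trivialization. Your Kodaira--Enriques shortcut (torus, K3, or primary Kodaira from $K_S\cong\mathcal{O}_S$) then applies only to the $G_D$ family, and you have no argument constraining the surface in the $G'_D$ case.

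The paper circumvents this by using a different invariant that \emph{is} common to both families: the closed nowhere-vanishing holomorphic $1$-form $dz$. This equips $S$ with what the paper calls a transverse translation structure, and the classification of such structures (via Brunella's classification of nonsingular holomorphic foliations on compact surfaces, lemma~\ref{lemma:TransverseTranslation}) forces $S$, up to finite cover, to be a torus, a flat principal elliptic bundle, or a primary Kodaira surface. Only at the very end does the paper use the volume form (for $G_D$) or the flat connection on $K_S$ (for $G'_D$) to rule out elliptic bundles over higher-genus bases. Your K3 elimination via simple connectivity is correct and pleasant, and for the $G_D$ family your route is a legitimate alternative; but for $G'_D$ you need the $1$-form and Brunella, or some replacement.

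A secondary issue: your claim that on a primary Kodaira surface ``no other holomorphic foliations are available'' is true but not standard---it is precisely the content of proposition~\ref{proposition:EllipticFiberBundleFoliations} and lemma~\ref{lemma:TransverseTranslation} in the paper, which again rely on Brunella's classification.
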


\section{Definitions}

\subsection{\texorpdfstring{$\left(G,X\right)$-structures}%
{(G,X)-structures}}

Suppose that $G$ is a Lie group and that $H \subset G$
is a closed subgroup and let $X=G/H$.
\begin{definition}
An \emph{$X$-chart} on a manifold $M$
is a local diffeomorphism from an open
subset of $M$ to an open subset of $X$.
\end{definition}

\begin{definition}
Two $X$-charts $f_0$ and $f_1$ 
on a manifold are $G$-\emph{compatible}
if there is some element $g \in G$
so that $f_1 = g f_0$ wherever 
both $f_0$ and $f_1$ are
defined.
\end{definition}

\begin{definition}
An \emph{$X$-atlas} on a manifold $M$
is a collection of mutually $G$-compatible
$X$-charts whose domains cover $M$.
\end{definition}

\begin{definition}
A $\left(G,X\right)$-structure on a manifold $M$ 
is a maximal $(G,X)$-atlas. 
\end{definition}

\subsection{Pulling back}

\begin{definition}
If $\map[F]{M_0}{M_1}$ is a local diffeomorphism,
and $H \subset G$ a closed subgroup of a Lie group,
then every $(G,X)$-structure on $M_1$ has a \emph{pullback
structure} on $M_0$, whose charts are precisely
the compositions $f \circ F$, for $f$ a chart
of the $(G,X)$-structure. Conversely, if
$F$ is a normal covering map, and $M_0$
has a $(G,X)$-structure which is invariant
under the deck transformations, then
it induces a $(G,X)$-structure on $M_1$.
\end{definition}

\subsection{Developing maps and holonomy morphisms}

\begin{definition}
Suppose that $\left(M,m_0\right)$ 
is a pointed manifold,
with universal covering space 
$\left(\tilde{M},\tilde{m}_0\right)$.
Suppose that $H \subset G$ is a closed
subgroup of a Lie group. Let $X=G/H$
and $x_0 = 1 \cdot H \in X$.
A $(G,X)$-\emph{developing system}
is a pair $\left(h,\delta\right)$
of maps, where
\[
\map[\delta]%
{\left(\tilde{M},\tilde{m}_0\right)}%
{\left(X,x_0\right)}
\]
is a local diffeomorphism
and 
\[
\map[h]{\fundgp{M}}{G}
\]
is a group homomorphism
so that
\[
\delta\left(\gamma \tilde{m}\right)
=
h\left(\gamma\right)\delta\left(\tilde{m}\right),
\]
for every $\gamma \in \fundgp{M}$
and $\tilde{m} \in \tilde{M}$.
The map $\delta$ is called the
\emph{developing map}, and the
morphism $h$ is called the 
\emph{holonomy morphism}
of the developing system.
\end{definition}

\begin{definition}
Denote the universal covering map of a
pointed manifold $\left(M,m_0\right)$
as 
\[
\map[\pi_M]{\left(\tilde{M},\tilde{m}_0\right)}%
{\left(M,m_0\right)}.
\] 
Given a $(G,X)$-developing system $\left(h,\delta\right)$
on a manifold $M$, the \emph{induced $(G,X)$-atlas} 
on $M$ is the one whose charts consist of all maps $f$
so that $\delta = f \circ \pi_M$.
Every $(G,X)$-atlas lies in a unique $(G,X)$-structure,
so the induced $(G,X)$-structure is the $(G,X)$-structure
containing the induced $(G,X)$-atlas.
\end{definition}

\begin{remark}
Conversely, it is well known \cite{Goldman:2010} that 
if $G$ acts faithfully on $X=G/H$, then
every
$(G,X)$-structure is induced by a developing system
$\left(h,\delta\right)$, which is uniquely
determined up to (1) conjugacy:
\[
\mapto{\left(h,\delta\right)}%
{\left(\Ad(g)h,g\delta\right)}
\]
and (2) choice of a point $m_0 \in M$ to develop from.
\end{remark}

As a general principle, geometric objects 
(i.e. tensors, foliations, maps, etc.) on $(G,X)$ 
which are invariant under the image of the holonomy
morphism induce geometric objects of
the same type on $M$. 

\subsection{Inducing structures from other structures}%
\label{subsection:Inducing}
\begin{definition}
Suppose that 
$\map[h]{G_0}{G_1}$
is a morphism of Lie groups,
that $H_0 \subset G_0$
and $H_1 \subset G_1$
are closed subgroups.
Let $X_0=G_0/H_0$ and $X_1=G_1/H_1$.
Suppose that 
$h\left(H_0\right) \subset H_1$.
Define a smooth map
\[
\map[\delta]{X_0}{X_1}
\]
by
\[
\delta\left(g_0H_0\right)
=
h\left(g_0\right)H_1.
\]
We call $\left(h,\delta\right)$ a \emph{morphism} of 
homogeneous spaces. Suppose also that 
\[
\map[h'(1)]{\LieG_0/\LieH_0}{\LieG_1/\LieH_1}
\]
is a linear isomorphism.
Then clearly $\delta$ is a local diffeomorphism,
and we call $\left(h,\delta\right)$ an \emph{inducing} morphism
of homogeneous spaces.
If a manifold $M$ is equipped with an $X_0$-chart $f$,
then $\delta \circ f$ is clearly a $X_1$-chart.
A $\left(G_0,X_0\right)$-structure $\left\{f_{\alpha}\right\}$
has \emph{induced} $\left(G_1,X_1\right)$-structure
$\left\{\delta \circ f_{\alpha}\right\}$.
Every developing system $\left(h_0,\delta_0\right)$
on $M$ has induced developing system
$\left(h_1,\delta_1\right)=\left(h \circ h_0,\delta \circ \delta_0\right)$.
\end{definition}

\subsection{\texorpdfstring{The definition of $G_D$}%
{The definition of GD}}\label{subsec:GD}

Pick an effective divisor $D$ on $\C{}$ of positive degree. 
Let $p(z)$ be the monic
polynomial with zero locus $D$ (counting multiplicities).
Let $V_D$ be the set of all holomorphic functions 
\map[f]{\C{}}{\C{}} 
so that 
\[
p\left(\partial_z\right) f(z) = 0.
\]
If 
\[
D = n_1 \left[\lambda_1\right] + n_2 \left[\lambda_2\right]
+ \dots + n_{\ell} \left[\lambda_{\ell}\right],
\]
with distinct $\lambda_j$, then
the space $V_D$ has as basis the functions
\[
z^k e^{\lambda z}
\]
for $\lambda=\lambda_j$ and $0 \le k \le n_j-1$, $j=1,2,\dots,\ell$.
Let $G_D=\C{} \ltimes V_D$ with the group operation
\[
\left(t_0,f_0(z)\right)\left(t_1,f_1(z)\right)
=
\left(t_0+t_1,f_0(z)+f_1\left(z-t_0\right)\right),
\] 
and inverse operation
\[
\left(t,f(z)\right)^{-1}
=
\left(-t,-f\left(z+t\right)\right),
\]
and identity element $(0,0)$.

Let $G_D$ act on $\C{2}$ by the faithful group action
\[
\left(t,f\right)\left(z,w\right)
=
\left(z+t,w+f\left(z+t\right)\right).
\] 
The stabilizer of the origin of $\C{2}$ is
the group $H_D \subset G_D$ of pairs
$(0,f)$ so that $f(0)=0$. The surface $\C{2}$
with this action of $G_D$ is the \emph{exotic Lie surface
of the first kind}.

Writing elements of $\C{2}$ as $(z,w)$,
the action of $G_D$ preserves 
the holomorphic vector field $\partial_w$,
the holomorphic 1-form
$dz$, the foliation $z=\text{constant}$, 
the holomorphic volume form $dz \wedge dw$.
On each leaf of that foliation, the stabilizer
of the leaf preserves the holomorphic 1-form $dw$.

If we add more points to our divisor, i.e. 
add an effective divisor $D'$, clearly
\begin{align*}
V_D &\subset V_{D+D'}, \\
G_D &\subset G_{D+D'}, \\
H_D &\subset H_{D+D'}, \\
G_D/H_D &= G_{D+D'}/H_{D+D'} = \C{2}.
\end{align*}
So any $\left(G_D,\C{2}\right)$-structure is also a
$\left(G_{D+D'},\C{2}\right)$-structure.

Write the divisor $D$ as
\[
D = 
n_1 \left[\lambda_1\right] + 
n_2 \left[\lambda_2\right] + 
\dots
+ 
n_k \left[\lambda_k\right],
\]
where $n_j \in \Z{}_{> 0}$ is a multiplicity,
and $\lambda_j \in \C{}$ is a point.
Pick any nonzero complex number $\mu$.
Let
\[
D' = 
n_1 \left[\mu \lambda_1\right] + 
n_2 \left[\mu \lambda_2\right] + 
\dots
+ 
n_k \left[\mu \lambda_k\right].
\]
There is an obvious isomorphism
\[
\mapto{(t,f) \in G_D}%
{
\left(
  \frac{t}{\mu}, f\left(\mu z\right) 
\right) 
\in 
  G_{D'}
}
\]
and equivariant biholomorphism
\[
\mapto{\left(z,w\right) \in \C{2}}{\left(\frac{z}{\mu},w\right)}.
\]

The Lie algebra $\LieG_D$ of $G_D$ is
spanned by $\partial_z$ together with
all of the vector fields
\[
f(z) \, \partial_w 
\]
where $f \in V_D$. The adjoint action is
\begin{align*}
\left(t,f\right)_* \partial_z &= \partial_z + f'(z) \partial_w, \\
\left(t,f\right)_* g(z) \partial_w &= g(z-t) \partial_w. 
\end{align*}

\subsection{\texorpdfstring%
{The definition of $G'_D$}%
{The definition of GprimeD}}%
Let $G'_D = \C{} \times \C{\times} \times V_D$
with group operation
\[
\left(t_0,\mu_0,f_0(z)\right)
\left(t_1,\mu_1,f_1(z)\right)
=
\left(t_0+t_1,\mu_0 \mu_1, 
f_0(z) + \mu_0 \, f_1\left(z-t_0\right)\right),
\]
inverse operation
\[
\left(t,\mu,f(z)\right)^{-1}
=
\left(-t,\frac{1}{\mu},-\frac{f\left(z+t\right)}{\mu}\right),
\]
identity element
\[
(0,1,0),
\]
and action on $\C{2}$
\[
\left(t,\mu,f\right)(z,w)
=
\left(z + t, \mu w + f\left(z + t\right)\right).
\]
The stabilizer $H'_D$ of $(0,0) \in \C{2}$ is the
group of triples $\left(0,\mu,f(z)\right)$
for which $f(0)=0$.
The surface $\C{2}$
with this action of $G'_D$ is the \emph{exotic Lie surface
of the second kind}.

Writing elements of $\C{2}$ as $(z,w)$,
the action of $G'_D$ preserves 
the holomorphic 1-form
$dz$, the foliation $z=\text{constant}$.
On each leaf of that foliation, the stabilizer
of that leaf preserves the
holomorphic affine connection $\nabla=\partial_w$.
The action of $G_D'$ on $\C{2}$ preserves
the standard flat connection on the 
canonical bundle of $\C{2}$.

 The Lie algebra of $G'_D$ is generated by 
$\partial_z, w \partial_w$ 
and the vector fields $f(z) \partial_w$
for $f \in V_D$.
The adjoint action is
\begin{align*}
\left(t,\mu,f\right)_* \partial_z &= \partial_z + f'(z) \partial_w, \\
\left(t,\mu,f\right)_* w \partial_w &= \left(w-f(z)\right) \partial_w, \\
\left(t,\mu,f\right)_* g(z) \partial_w &= \mu g(z-t) \partial_w. 
\end{align*}

Clearly $G_D$ is a subgroup of $G'_D$.

\section{Statements of the theorems}

In the remainder of this paper we will prove the following theorems.

\begin{theorem}
For any effective divisor $D$ on $\C{}$,
either (1) $0$ does not lie in the support of $D$
and no complex torus admits any holomorphic
$\left(G_D,\C{2}\right)$-structure
or (2) 
$0$ lies in the support of $D$ and 
every complex torus admits a 1-parameter
family of pairwise nonisomorphic
holomorphic $\left(G_D,\C{2}\right)$-structures.
Any holomorphic $\left(G_D,\C{2}\right)$-structure 
on any complex torus is isomorphic to
precisely one of these.
\end{theorem}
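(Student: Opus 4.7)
The plan is a direct analysis of the developing map $\delta\colon\C{2}\to\C{2}$ and holonomy $h\colon\Lambda\to G_D$ of a hypothetical $(G_D,\C{2})$-structure on $T=\C{2}/\Lambda$. I would first exploit the $G_D$-invariants singled out in section~\ref{subsec:GD}: the pullbacks $\delta^{*}dz$ and $\delta^{*}(dz\wedge dw)$ are $\Lambda$-invariant nowhere-vanishing holomorphic forms on $\C{2}=\tilde T$, hence descend to translation-invariant forms on $T$. After a linear change of coordinates on $\tilde T$ (transporting the lattice to an equivalent one) together with absorption of a constant by a $G_D$-translation, I may put the developing map into the normal form
\[
\delta(u,v)=\bigl(u,\;\mu v+F(u)\bigr),
\]
with $\mu\in\C{\times}$ and $F\colon\C{}\to\C{}$ holomorphic.

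Next I would extract the functional equation on $F$. Writing $h(\lambda)=(t_\lambda,f_\lambda)$ for $\lambda=(\lambda^{1},\lambda^{2})\in\Lambda$, equivariance forces $t_\lambda=\lambda^{1}$ and $f_\lambda(z)=F(z)-F(z-\lambda^{1})+\mu\lambda^{2}$. Applying $p(\partial_z)$ to the membership $f_\lambda\in V_D$ and setting $G:=p(\partial_z)F$ yields
\[
G(z)-G(z-\lambda^{1})=-p(0)\,\mu\,\lambda^{2}\qquad(\lambda\in\Lambda,\;z\in\C{}).
\]
This is the source of the dichotomy. If $p(0)\neq 0$, then any $\lambda\in\Lambda$ with $\lambda^{1}=0$ must satisfy $\lambda^{2}=0$, so $\mathrm{pr}_1\colon\Lambda\to\C{}$ is injective; its image $\Lambda_1$ then has $\Z{}$-rank $4$ in $\R{2}$ and is therefore dense. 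The $\Z{}$-linear map $\phi\colon\Lambda_1\to\C{}$, $\lambda^{1}\mapsto\lambda^{2}$, satisfies $-p(0)\mu\,\phi(t)=G(z_0+t)-G(z_0)$ for any fixed $z_0$, hence extends to a holomorphic group homomorphism $\C{}\to\C{}$, necessarily $\C{}$-linear. But then $\Lambda$ lies on a complex line, contradicting its rank, which establishes case~(1).

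If on the other hand $p(0)=0$, the equation collapses to $G(z+t)=G(z)$ for $t\in\Lambda_1$; whether $\Lambda_1$ is dense or a rank-$2$ lattice (so that $T$ fibers over an elliptic curve), $G$ must be a constant $G_0$. Existence on every torus is then immediate: $\delta(u,v)=(u,\mu v)$ with $h(\lambda)=(\lambda^{1},\mu\lambda^{2})$ gives a valid structure since constants now lie in $V_D$. To finish I would quotient the residual freedom in $(\mu,F)$ by $G_D$-conjugation---which modifies $F$ by a shift of argument and by an element of $V_D$---and by $\mathrm{Aut}(T)$, whose translations shift $F(u)$ by $F(u+u_0)-F(u_0)$ and whose $-\mathrm{id}$ involution sends $(\mu,G_0)$ to $(-\mu,-G_0)$. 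The main obstacle is precisely this final uniqueness step: isolating the single complex invariant that survives all these normalizations, verifying that every value of it is realized, and handling separately the two geometric cases where $\Lambda_1$ is dense versus a discrete rank-$2$ lattice (in the latter case the extra automorphism groups coming from the elliptic fibration enter the classification). The explicit construction in section~\ref{section:GDtori} would then serve as the canonical representative of each isomorphism class.
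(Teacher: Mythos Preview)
Your strategy---normalize the developing map via the $G_D$-invariant forms $dz$ and $dz\wedge dw$, then apply $p(\partial_z)$ to the equivariance relation---is the paper's. Two points in the execution need repair, and in each case the paper's route is cleaner.

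\textbf{The dichotomy.} In the case $p(0)\neq 0$ you assert that a rank-$4$ subgroup of $\mathbb{R}^{2}$ is automatically dense; this is false (take $\mathbb{Z}+\sqrt{2}\,\mathbb{Z}+\sqrt{3}\,\mathbb{Z}+\sqrt{5}\,\mathbb{Z}\subset\mathbb{R}\subset\mathbb{C}$), and your subsequent ``extends to a holomorphic group homomorphism'' step rests on that density. The case $p(0)=0$ has a matching gap: ``dense or a rank-$2$ lattice'' does not exhaust the possibilities for $\Lambda_{1}$. The paper sidesteps all of this with one observation. Differentiating $G(z)-G(z-\lambda^{1})=-p(0)\mu\lambda^{2}$ in $z$ shows that $G'$ is $\Lambda_{1}$-periodic; regarded as a function on $\mathbb{C}^{2}$ constant in the second variable, $G'$ is then $\Lambda$-invariant, descends to the \emph{compact} torus, and is therefore constant. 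Hence $G(u)=k_{0}u+k_{1}$, and equivariance becomes $k_{0}\lambda^{1}+p(0)\mu\lambda^{2}=0$ for all $\lambda\in\Lambda$. Since $\Lambda$ spans $\mathbb{C}^{2}$, both coefficients vanish, yielding $k_{0}=0$ and $p(0)=0$ at once---no case split, no analysis of $\Lambda_{1}$.

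\textbf{The invariant.} Your ``main obstacle'' is largely self-created by carrying $\mu$ through. Once $G=p(\partial_z)F$ is constant and $0$ has multiplicity $n_{0}\ge 1$ in $D$, one has $F\in V_{D+[0]}$, so $F(u)=f(u)+ku^{n_{0}}$ with $f\in V_{D}$ and $k\in\mathbb{C}$. Conjugating the developing system by $(0,-f)\in G_{D}$ removes $f$; a rescaling of the second coordinate on $\tilde T$ absorbs $\mu$. The residual parameter is simply $k$, and the paper's canonical form is $\delta(s,t)=(s,t+ks^{n_{0}})$. There is no need to invoke $\mathrm{Aut}(T)$, the $-\mathrm{id}$ involution, or a dense-versus-lattice dichotomy for $\Lambda_{1}$.
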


\begin{theorem}
For any effective divisor $D$ on $\C{}$, 
and for each distinct point $\lambda$
in the support of $D$,
every complex torus admits a 1-parameter
family of pairwise nonisomorphic
holomorphic $\left(G'_D,\C{2}\right)$-structures
and an exceptional $\left(G'_D,\C{2}\right)$-structure.
Any holomorphic $\left(G'_D,\C{2}\right)$-structure 
on any complex torus is isomorphic to
precisely one of these.
\end{theorem}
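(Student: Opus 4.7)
My plan is to classify $(G'_D, \C{2})$-structures on a complex torus $T = \C{2}/\Lambda$ through the developing pair $(h, \delta)$, with $h\colon \Lambda \to G'_D$ and $\delta\colon \tilde T = \C{2} \to \C{2}$. First I would use the $G'_D$-invariant 1-form $dz$: the pullback $\delta^* dz$ is a nowhere-vanishing holomorphic 1-form on $T$, hence constant in universal-cover coordinates. After a linear change of coordinates on $\tilde T$ (with $\Lambda$ transforming accordingly), I may arrange $\delta^* dz = dz_1$, so $\delta(z_1, z_2) = (z_1, W(z_1, z_2))$ and the translation part of the holonomy is $t(\gamma) = \gamma_1$. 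Projecting $h$ through $G'_D \to \C{\times}$ yields the multiplier character $\mu \colon \Lambda \to \C{\times}$, satisfying $\partial_{z_2} W(z + \gamma) = \mu(\gamma)\, \partial_{z_2} W(z)$.

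Since $\partial_{z_2}W$ is nowhere-zero on $\tilde T$ (the local-diffeomorphism condition), $\log \partial_{z_2}W$ is a well-defined holomorphic function on the simply connected $\tilde T$ whose $\Lambda$-increments are additive constants. Differentiating, $\log \partial_{z_2}W$ must be affine linear, so $\log \partial_{z_2}W = \alpha z_1 + \beta z_2 + c$ and $\mu(\gamma) = e^{\alpha\gamma_1 + \beta\gamma_2}$. Integrating in $z_2$ produces two normal forms: $W = (A/\beta)\, e^{\alpha z_1 + \beta z_2} + g(z_1)$ when $\beta \neq 0$, and $W = A z_2 e^{\alpha z_1} + g(z_1)$ when $\beta = 0$, for some holomorphic function $g$ of one variable. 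Substituting back into the equivariance yields $f_{\gamma}(z) = g(z) - \mu(\gamma)\, g(z - \gamma_1)$, together with an extra summand $A\gamma_2 e^{\alpha z}$ in the second case, and the membership $f_{\gamma} \in V_D$ supplies the decisive constraints.

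In the $\beta = 0$ branch, taking $\gamma \in \Lambda$ with $\gamma_1 = 0,\, \gamma_2 \neq 0$ forces $e^{\alpha z} \in V_D$, so $\alpha$ must be one of the distinct points $\lambda \in \operatorname{supp}(D)$; the residual functional equation $p(\partial)g(z) = e^{\lambda\gamma_1}(p(\partial)g)(z - \gamma_1)$, combined with normalization via $G'_D$-conjugation and $T$-translations (which rescale $A$ and shift $g$ freely inside $V_D$), leaves a single complex parameter, namely the coefficient of the monomial $z^{n_\lambda} e^{\lambda z}$ in $g$; this produces the 1-parameter family of structures attached to $\lambda$, while its degenerate limit (coefficient zero) is the exceptional structure. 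The $\beta \neq 0$ branch, where the dichotomy on $p(\partial)g$ either lets one absorb $g$ into $V_D$ by conjugation or pins down $\beta$ discretely via $\mu|_{\Lambda \cap \ker dz_1} = 1$, is shown to contribute no further classes under the same equivalences.

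The main obstacle will be the careful bookkeeping of these identifications --- coordinating $G'_D$-conjugation, $T$-translation, and the choice of developing base point --- so that every structure reduces to a unique normal form and the residual invariants align with the labelling in the theorem. Pairwise non-isomorphism then follows from tracking the multiplier character $\mu$, the distinguished point $\lambda$ arising as the allowed value of $\alpha$, and the residual class of $g$ modulo $V_D$; existence of each listed class is transparent from the explicit developing maps above.
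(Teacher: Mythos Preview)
Your overall strategy matches the paper's closely (normalize $\delta^* dz = dz_1$, study $\log \partial_{z_2}W$, split on whether the $z_2$-coefficient $\beta$ vanishes), but there are two genuine errors in the case analysis.

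First, the step ``taking $\gamma \in \Lambda$ with $\gamma_1 = 0,\ \gamma_2 \neq 0$'' is illegitimate: for a generic lattice $\Lambda \subset \C{2}$ the first-coordinate projection $\Lambda \to \C{}$ is injective, so no such element exists. The paper avoids this by applying the operator $P_D$ to the equivariance relation, observing that $P_{D_a}W'$ is $\Lambda$-periodic hence constant, so $P_{D_a}W(s) = k_0 s + k_1$; plugging back in yields a \emph{linear} relation $k_0\lambda + c\,\mu = 0$ valid for every $(\lambda,\mu)\in\Lambda$, and since $\Lambda$ spans $\C{2}$ this forces $k_0 = 0$ and $c = 0$, the latter being exactly the condition $\alpha \in \operatorname{supp}(D)$. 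You invoke the same $\Lambda\cap\ker dz_1$ device in the $\beta\neq 0$ branch, where it fails for the same reason.

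Second, you have the exceptional structure in the wrong place. You describe it as the ``degenerate limit (coefficient zero)'' inside the $\beta = 0$ family, and you assert that the $\beta \neq 0$ branch ``contributes no further classes.'' In fact the paper shows the opposite: the $\beta \neq 0$ branch, after one proves $P_{D_a}W = 0$ (via a flat-line-bundle argument on the torus: $F = P_{D_a}W$ satisfies $F(s+\lambda) = e^{\mu}F(s)$, hence is a holomorphic section of a degree-zero bundle, hence identically zero unless constant-nonzero, which is then excluded), conjugates to the single developing map $\delta(s,t) = (s,e^t)$ --- and \emph{this} is the exceptional structure. The $k=0$ members of the $\beta=0$ families, namely $\delta(s,t) = (s, e^{\lambda s}t)$, remain distinct ordinary members of those families, one for each $\lambda$ in the support.
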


\begin{theorem}
For any effective divisor $D$ on $\C{}$,
either (1) $0$ has multiplicity less than 2 in $D$
and no Kodaira surface admits any holomorphic
$\left(G_D,\C{2}\right)$-structure
or (2) 
$0$ has multiplicity at least $2$ in $D$ and
every primary Kodaira surface admits a 1-parameter
family of pairwise nonisomorphic
holomorphic $\left(G_D,\C{2}\right)$-structures.
Any holomorphic $\left(G_D,\C{2}\right)$-structure 
on any primary Kodaira surface is isomorphic to
precisely one of these.
\end{theorem}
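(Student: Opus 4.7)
My plan is to combine an explicit construction with an obstruction argument and a classification via developing map and holonomy, paralleling the preceding two theorems. A primary Kodaira surface can be written $S=\C{2}/\Gamma$, where $\Gamma$ is a cocompact lattice acting freely by elements of the group $G_0$ from Theorem~\ref{1.5}. Unpacking definitions, $G_0=G_{2[0]}$: with $D=2[0]$ one has $V_D=\operatorname{span}(1,z)$ and $(t,c+az)\cdot(z,w)=(z+t,w+az+(c+at))$, matching the $G_0$-action after setting $b=t$. Hence $G_0\subset G_D$ precisely when $V_{2[0]}\subset V_D$, i.e.\ when $\operatorname{mult}_0 D\ge 2$. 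In that case, the translation structure on $S$ induces a $(G_D,\C{2})$-structure via the inducing construction of section~\ref{subsection:Inducing}, and varying the Kodaira moduli gives a 1-parameter family.

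For the converse, suppose $S$ carries a $(G_D,\C{2})$-structure with developing system $(h,\delta)$. The $G_D$-invariants $dz$, $dz\wedge dw$, $\partial_w$, and the foliation $z=\text{const}$ descend to a nowhere-vanishing holomorphic 1-form $\omega$, volume form $\Omega$, vector field $V$, and a holomorphic foliation $\mathcal F$ on $S$. Since $h^{1,0}(S)=1$ and the canonical bundle is trivial on a primary Kodaira surface, $\omega$ is (up to scale) the canonical Kodaira 1-form and $\mathcal F$ is the elliptic fibration. Compactness of $S$ makes $V$ complete; lifting its flow to $\tilde S$ and noting that it maps under $\delta$ to $w$-translation on $\C{2}$, combined with $\omega$-integration on the simply connected $\tilde S$, shows that $\delta:\tilde S\to\C{2}$ is a biholomorphism. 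The structure is therefore determined by the holonomy $h:\fundgp{S}\to G_D$ up to conjugation in $G_D$.

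The final step analyzes $h$ using the 2-step nilpotent structure of $\fundgp{S}$, which has generators $\gamma_1,\dots,\gamma_4$ with $\gamma_3,\gamma_4$ central and a relation $[\gamma_1,\gamma_2]=\gamma_3^a\gamma_4^b$ representing a nontrivial central element. A direct computation in $G_D$ gives
\[
[(t_0,f_0),(t_1,f_1)]=\bigl(0,\; f_0(z)-f_0(z-t_1)-f_1(z)+f_1(z-t_0)\bigr).
\]
The central images $h(\gamma_3),h(\gamma_4)$ are of the form $(0,c)$ with $c$ constant, so the commutator must itself equal a nonzero $(0,c)$. Decomposing $V_D$ in the basis $z^k e^{\lambda z}$, the operator $f(z)\mapsto f(z-t)$ acts diagonally in blocks, and one checks that only the $\lambda=0$, $k=1$ summand contributes a nonzero constant to $f_i(z)-f_i(z-t_j)$. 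Hence we need $z\in V_D$ (so the $z$-linear term can appear) and $1\in V_D$ (so the constant $c$ itself lies in $V_D$), i.e.\ $\operatorname{mult}_0 D\ge 2$, proving case (1). In case (2), I then conjugate $h(\fundgp{S})$ into $G_0\subset G_D$ and verify that exactly one continuous modulus survives the normalization, matching the family constructed in the first paragraph. This final conjugation-and-counting step is the main technical obstacle, since it requires controlling all residual symmetries in $G_D\setminus G_0$ and confirming that none absorbs the surviving parameter.
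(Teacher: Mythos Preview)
Your obstruction argument via the commutator relation is a pleasant alternative to the paper's direct computation, but it rests on the unproved assertion that $h(\gamma_3),h(\gamma_4)$ have the form $(0,c)$ with $c$ a \emph{constant} function. Centrality of $\gamma_3,\gamma_4$ in $\pi_1(S)$ does not by itself give centrality in $G_D$. This can be repaired using your own vector-field observation: since $\partial_w$ descends to a multiple of the principal-bundle generator on $S$, the central deck transformations act on $\tilde S$ by the flow of that lifted field, and hence correspond under $\delta$ to pure $w$-translations. You should make this step explicit.

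The fatal problem is your last paragraph. You propose to conjugate $h(\pi_1(S))$ into $G_0=G_{2[0]}\subset G_D$ and then find one surviving modulus, but for $n_0>2$ this conjugation is impossible. After the normalisation of Lemma~\ref{lemma:PrimaryKodaira1.5} the developing map is $(s,t)\mapsto(s,t+k_0 s^{n_0})$, so $f_\gamma(z)$ carries a term $k_0 n_0 \gamma_3\, z^{n_0-1}$. Conjugation by any $(T,g)\in G_D$ alters $f_\gamma$ by $g(z)-g(z-\gamma_3)$, whose polynomial part has degree at most $n_0-2$ because $g\in V_D$ forces its $\lambda=0$ component to have degree $\le n_0-1$; the $z^{n_0-1}$ term therefore survives whenever $k_0\ne 0$. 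That surviving coefficient $k_0$ \emph{is} the $1$-parameter family. It does not come from ``varying the Kodaira moduli'' as you write in the first paragraph, but from the inducing morphisms $G_{2[0]}\to G_{n_0[0]}$ of Example~\ref{example:KodairaGD}, namely conjugation by $\bigl(0,\tfrac{k}{n_0}z^{n_0}\bigr)\in G_{(n_0+1)[0]}$, an element lying \emph{outside} $G_D$. Both your construction of the family and your classification step are built on a misidentification of where this modulus lives.
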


\begin{theorem}
For any effective divisor $D$ on $\C{}$, 
and for each $\lambda$
with multiplicity 2 or more in $D$,
any primary Kodaira surface admits a 1-parameter
family of pairwise nonisomorphic
holomorphic $\left(G'_D,\C{2}\right)$-structures.
Any holomorphic $\left(G'_D,\C{2}\right)$-structure 
on any primary Kodaira surface is isomorphic to
precisely one of these.
\end{theorem}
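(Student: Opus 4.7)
The approach runs in parallel to the preceding theorem on $(G_D,\C{2})$-structures. Let $S$ be a primary Kodaira surface with standard generators $\gamma_1,\gamma_2,\gamma_3,\gamma_4$ of $\fundgp{S}$ where $\gamma_3,\gamma_4$ are central and $[\gamma_1,\gamma_2]=\gamma_3^m$ with $m\neq 0$. For any $(G'_D,\C{2})$-structure with developing system $(h,\delta)$, write $\delta=(u,v)$ and $h(\gamma_i)=(t_i,\mu_i,f_i)\in G'_D$; the plan is to pin down every component of this data from the group relations.

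The $G'_D$-invariant $1$-form $dz$ pulls back to $du$, which descends to the essentially unique holomorphic $1$-form on $S$, itself pulled back from the base elliptic curve $E$. Hence $u$ is affinely equivalent to the lift of $S\to E$ to $\tilde S$: one obtains $t_3=t_4=0$ and $(t_1,t_2)$ generates the period lattice of $E$. Centrality of $\gamma_3,\gamma_4$ together with the multiplication law in $G'_D$ then forces $\mu_3=\mu_4=1$ and, after decomposing $V_D$ into the generalised eigenspaces of $\partial_z$, forces $f_3,f_4$ to be non-zero scalar multiples of a single exponential, say $f_3=c_3 e^{\lambda z}$ and $f_4=c_4 e^{\lambda z}$ with $\lambda$ in the support of $D$.

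The decisive step is the reduction to the $(G_D,\C{2})$-case already treated. The biholomorphism $\phi(z,w)=(z,e^{-\lambda z}w)$ of $\C{2}$ is not an element of $G'_D$, but direct computation shows that post-composing the developing map with $\phi$ converts the $(G'_D,\C{2})$-structure with spectral datum $\lambda$ above into a structure whose holonomy is affine on $\C{2}$, provided $\mu_i=e^{\lambda t_i}$ for $i=1,2$ and $f_i(z)=e^{\lambda z}(b_i z+c_i)$ for complex numbers $b_i,c_i$. Membership $f_i\in V_D$ then requires $V_D$ to contain $z e^{\lambda z}$, which occurs precisely when $\lambda$ has multiplicity at least $2$ in $D$; the non-triviality of the commutator $[\gamma_1,\gamma_2]=\gamma_3^m$ forces $b_i\neq 0$, ruling out the multiplicity-$1$ case. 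The reduced affine structure is a $(G_D,\C{2})$-structure of the form classified in the preceding theorem, whose $1$-parameter family transports through $\phi^{-1}$ to the asserted $1$-parameter family of $(G'_D,\C{2})$-structures; uniqueness of the spectral $\lambda$ and of the reduced $G_D$-structure yield the classification.

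The main obstacle is making the outer reduction via $\phi$ precise as a correspondence of $(G,X)$-structures: since $\phi\notin G'_D$, one must check that the $\mu$-character $\gamma_i\mapsto e^{\lambda t_i}$ is in fact the only one that allows the $f_i$ to lie in $V_D$ simultaneously with the commutator relation, that the residual modulus (coming from the choice of $(c_3,c_4)$ modulo inner conjugation in $G'_D$, which rescales $w$) transports correctly through $\phi$, and that different $\lambda$ in the support of $D$ produce genuinely non-isomorphic $(G'_D,\C{2})$-structures on $S$.
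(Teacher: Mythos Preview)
Your reduction via $\phi(z,w)=(z,e^{-\lambda z}w)$ is a genuinely different organising idea from the paper's direct developing-map computation, and once it is set up it does exactly what the paper's Example~\ref{example:GDPrimeOnKodaira} encodes: it transports the $(G'_D,\C{2})$-structure with spectral datum $\lambda$ to a $(G_{D_\lambda},\C{2})$-structure (with $D_\lambda$ the shifted divisor), so that the multiplicity condition $n_\lambda\ge 2$ in $D$ becomes the condition $n_0\ge 2$ in $D_\lambda$ and the preceding theorem applies verbatim. The paper instead reruns the whole $P_D$-analysis from scratch for $G'_D$; your route is more economical because it recycles the $G_D$ classification rather than repeating it.

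The gap is in your second paragraph: centrality and the commutator relation do \emph{not} force $\mu_3=\mu_4=1$. The character $\gamma\mapsto\mu_\gamma$ factors through $\pi_1(S)^{\mathrm{ab}}\cong\Z^3\times\Z/r\Z$, so group theory alone yields only $\mu_3^{\,r}=1$ and imposes nothing on $\mu_4$. Everything downstream in your plan (that $f_3,f_4$ lie in a single exponential line, that $\mu_i=e^{\lambda t_i}$, and hence that $\phi$ lands you in $G_{D_\lambda}$) hinges on $\mu_3=\mu_4=1$, so this is a genuine missing step, not a routine check. The paper obtains it analytically: writing $h=\partial_t w$, the ratio $(\partial_t h)/h$ is $\pi_1(S)$-invariant on $\tilde S$, hence a constant $k_0$; then with $h=e^{k_0t}H(s)$ one finds $(H'/H)'$ constant, say $k_1$, and equivariance gives $k_1\gamma_3=-k_0\gamma_1$ for every $\gamma$. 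On a primary Kodaira surface the pairs $(\gamma_1,\gamma_3)$ of the generators $a,b$ span $\C{2}$ (indeed $a_1b_3-a_3b_1=rc_2\ne 0$), forcing $k_0=k_1=0$. This yields $\partial_t w=e^{k_2 s+k_3}$ and hence $\mu_\gamma=e^{k_2\gamma_3}$; since the central generators have $\gamma_3=0$, one gets $\mu_3=\mu_4=1$. You need exactly this analytic input (or an equivalent compactness argument on the fibre direction) before your $\phi$-reduction can begin.
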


\section{\texorpdfstring{$G_D$ and tori}{GD and tori}}%
\label{section:GDtori}

\begin{example}\label{example:TorusZeta}
Pick an effective divisor $D$ on $\C{}$.
Let $n_0$ be the order of $0$ in $D$,
and assume that $n_0 \ge 1$.
Let $G_0 = \left(\C{2},+\right)$, and
$H_0=\left\{0\right\} \subset G_0$.
Pick any $k \in \C{}$. Define a complex Lie group morphism
\[
\mapto[h]
{
\left(\lambda,\mu\right) \in \C{2}=G_0
}
{
\left(
\lambda,
\mu
+k
  \left(
    z^{n_0}-\left(z-\lambda\right)^{n_0}
  \right)
\right) 
\in G_D
}
.
\]
Check that this induces the biholomorphism
\[
\mapto[\delta]
{\left(s,t\right) \in \C{2} = G_0/H_0}
{\left(s,t+ks^{n_0} \right) \in \C{2}},
\]
so that $\left(h,\delta\right)$ is an inducing morphism of
homogeneous spaces.
Consequently every holomorphic $\left(G_0,\C{2}\right)$-structure, 
i.e. translation
structure on a complex surface, induces a holomorphic 
$\left(G_D,\C{2}\right)$-structure.
In particular, every complex torus bears a holomorphic
$\left(G_D,\C{2}\right)$-structure.
The inducing morphisms are conjugate
just when the induced $\left(G_D,\C{2}\right)$-structures are 
conjugate. We will prove 
below that all $\left(G_D,\C{2}\right)$-structures on
the torus are induced by an inducing morphism of 
this form, up to choice of affine coordinates
on the universal covering space of the torus. So the moduli 
space of $\left(G_D,\C{2}\right)$-structures modulo
conjugation is identified with $\C{}$, and does
not depend on the particular choice of complex torus.
\end{example}

\begin{proposition}\label{proposition:Torus}
Suppose that $S$ is a complex torus bearing a 
$\left(G_D,\C{2}\right)$-structure for some effective
divisor $D$ on $\C{}$. Then, for some
choice of affine coordinates on the universal
covering space of $S$, 
the $\left(G_D,\C{2}\right)$-structure is
one of those in example~\vref{example:TorusZeta}.
\end{proposition}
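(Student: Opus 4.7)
The plan is to exploit the $G_D$-invariant vector field $\partial_w$ and one-form $dz$ on $\C{2}$ to normalize the developing map $\delta\colon \tilde S \to \C{2}$, and then read off the holonomy from the definition of $V_D$. Writing $\Lambda = \fundgp{S}$, the unique holomorphic vector field $V$ on $\tilde S$ with $\delta_* V = \partial_w$ is $\Lambda$-invariant (since the holonomy lies in $G_D$ and $G_D$ fixes $\partial_w$), and so is the pullback $\delta^* dz$. Both descend to a holomorphic vector field and a holomorphic one-form on the compact torus $S$, which are translation-invariant, hence constant in any complex-affine coordinates $(s,t)$ on $\tilde S = \C{2}$. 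Writing $V = a\partial_s + b\partial_t$ and $\delta^* dz = \alpha\, ds + \beta\, dt$, the identity $dz(\partial_w) = 0$ forces $\alpha a + \beta b = 0$. A complex-linear change of $(s,t)$ normalizes $V = \partial_t$, which gives $a=0$, $b=1$, so $\beta = 0$; a further rescaling and translation in $s$ then brings $\delta^* dz$ to $ds$. From these normalizations one reads off $\delta(s,t) = (s, t + g(s))$ for some entire $g \colon \C{} \to \C{}$.

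Next, the equivariance $\delta(s + \gamma_1, t + \gamma_2) = h(\gamma)\cdot \delta(s,t)$ for each $\gamma = (\gamma_1, \gamma_2) \in \Lambda$, combined with the $G_D$-action formula, forces $h(\gamma) = (\gamma_1, f_\gamma)$ with $f_\gamma(z) = \gamma_2 + g(z) - g(z - \gamma_1)$. The membership $f_\gamma \in V_D$, i.e.\ $p(\partial_z) f_\gamma = 0$, splits into $p(0)\,\gamma_2 = 0$ and $p(\partial_z)\bigl(g(z) - g(z - \gamma_1)\bigr) = 0$; the second equation says that $p(\partial_z) g$ is invariant under translation by every first coordinate $\gamma_1$ coming from $\Lambda$.

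The analytic heart of the argument is to show that $p(\partial_z) g$ is constant on $\C{}$. Let $\pi(\Lambda) \subset \C{}$ denote the image of $\Lambda$ under projection onto the first coordinate. Because $\Lambda$ is a rank-four lattice $\R{}$-spanning $\C{2}$, $\pi(\Lambda)$ is an additive subgroup of $\C{}$ whose $\R{}$-span is $\C{}$. If $\pi(\Lambda)$ is discrete, it is a rank-two lattice and $p(\partial_z) g$ is a doubly periodic entire function, hence constant by Liouville's theorem. If $\pi(\Lambda)$ is non-discrete, its closure contains a real line $L$; then $p(\partial_z) g$ is constant along $L$, its complex derivative vanishes on $L$, and by analytic continuation it vanishes on all of $\C{}$, so $p(\partial_z) g$ is again constant. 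Treating these arithmetic cases uniformly is the step I expect to require the most care.

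Finally, I would solve $p(\partial_z) g = c$ explicitly. Some $\gamma \in \Lambda$ has $\gamma_2 \neq 0$, because $\Lambda$ $\R{}$-spans $\C{2}$, and the condition $p(0)\gamma_2 = 0$ then forces $p(0) = 0$; equivalently, $0$ lies in $D$ with multiplicity $n_0 \geq 1$, recovering the obstruction in the preceding theorem. Writing $p(z) = z^{n_0} q(z)$ with $q(0) \neq 0$, a particular solution of $p(\partial_z) g = c$ is $g_0(z) = k z^{n_0}$ with $k = c/(n_0!\, q(0))$; the general solution is $g = g_0 + g_h$ with $g_h \in V_D$. The piece $g_h$ is absorbed by conjugating the developing system with $(0, -g_h) \in G_D$, after which $\delta(s,t) = (s, t + k s^{n_0})$, which is exactly the developing map of example~\vref{example:TorusZeta}.
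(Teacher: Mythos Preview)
Your normalization via the invariant vector field $\partial_w$ and $1$-form $dz$ is a clean repackaging of the paper's argument, and the reduction to $\delta(s,t)=(s,t+g(s))$ with $f_\gamma(z)=\gamma_2+g(z)-g(z-\gamma_1)$ is correct. The gap is the claim that $p(\partial_z)f_\gamma=0$ ``splits into $p(0)\gamma_2=0$ and $p(\partial_z)\bigl(g(z)-g(z-\gamma_1)\bigr)=0$.'' It does not: applying $p(\partial_z)$ yields the single equation
\[
p(0)\,\gamma_2 + F(z)-F(z-\gamma_1)=0,\qquad F:=p(\partial_z)g,
\]
and nothing forces the two summands to vanish separately---$F(z)-F(z-\gamma_1)$ could perfectly well equal the nonzero constant $-p(0)\gamma_2$. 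Your subsequent periodicity argument therefore does not apply to $F$ as written, and the deduction of $p(0)=0$ from ``$p(0)\gamma_2=0$ for some $\gamma_2\ne 0$'' is unjustified at that stage.

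The fix is immediate and is exactly what the paper does: differentiate the displayed relation once more to obtain $F'(z)=F'(z-\gamma_1)$ for every $\gamma_1\in\pi(\Lambda)$. Regarded as a function on $\C{2}$ constant in $t$, $F'$ is then $\Lambda$-invariant, hence a holomorphic function on the compact torus $S$, hence a constant $k_0$; thus $F(z)=k_0 z+k_1$. Substituting back gives $k_0\gamma_1+p(0)\gamma_2=0$ for every $(\gamma_1,\gamma_2)\in\Lambda$, and since $\Lambda$ spans $\C{2}$ over $\R{}$ this forces $k_0=0$ and $p(0)=0$ simultaneously. From $F\equiv k_1$ your final paragraph goes through verbatim. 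Note too that this ``descend to $S$ and use compactness'' device renders your discrete/non-discrete case analysis on $\pi(\Lambda)$ unnecessary.
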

\begin{proof}
Pick any $\left(G_D,\C{2}\right)$-structure
on any complex torus $S=\C{2}/\Lambda$. Write the developing map
as
\[
\delta(s,t)=\left(z(s,t),w(s,t)\right)
\]
and the holonomy morphism as
\[
h\left(\lambda,\mu\right)
=
\left(t_{\lambda,\mu},f_{\lambda,\mu}\right),
\]
for each 
\[
\left(\lambda,\mu\right) \in \Lambda.
\]
So
\[
z\left(s+\lambda,t+\mu\right)
=
z(s,t)+t_{\lambda,\mu}.
\]
Consequently, $dz$ is a holomorphic 1-form on $S$, so
\[
dz = a \, ds + b \, dt
\]
for some complex numbers $a, b \in \C{2}$. We can
arrange, by linear change of variables on 
$\C{2}=\tilde{S}$ that $z=s$. Therefore
$t_{\lambda,\mu}=\lambda$. So $w(s,t)$ must satisfy
\[
w\left(s+\lambda,t+\mu\right)
=
w(s,t)+f_{\lambda,\mu}\left(s+\lambda\right),
\]
for every $\left(\lambda,\mu\right) \in \Lambda$
and $\left(s,t\right) \in \C{2}$.
So
\[
\pd{w}{t}\left(s+\lambda,t+\mu\right)
=
\pd{w}{t}(s,t),
\]
i.e. $\pd{w}{t}$ is a holomorphic function on
the compact complex surface $S$, so a constant,
which we rescale to be $1$, so
\[
w(s,t)=t+W(s),
\]
for some holomorphic function $W(s)$.
The holonomy equivariance of $\delta$ is then
precisely
\[
W\left(s+\lambda\right) - W(s) = f_{\lambda,\mu}(s+\lambda) - \mu.
\]
Let $P_D$ be the differential operator
\[
P_D = \prod_j \left(\partial_s - \lambda_j\right)^{n_j}.
\]
So a holomorphic function $F(s)$ lies in $V_D$ just 
when $P_D F(s)=0$. Applying $P_D$ to both sides,
\[
P_D W \left(s+\lambda\right) = P_D W \left(s\right) - P_D \mu.
\]
Therefore $P_D W'$ is a holomorphic function
on the surface $S$, and so is constant,
say
\[
P_D W'(s) = k_0,
\]
and
\[
P_D W(s) = k_0 s + k_1.
\]
Therefore
\[
k_0\left(s+\lambda\right) + k_1 = k_0 s + k_1 - P_D \mu.
\]
Clearly
\[
P_D \mu = \mu \prod_j \left(-\lambda_j\right)^{n_j}.
\]
So
\[
k_0 \lambda = - \mu \prod_j \left(-\lambda_j\right)^{n_j},
\]
for every $\left(\lambda,\mu\right) \in \Lambda$.
Since $\Lambda \subset \C{2}$ is a lattice, 
so spans $\C{2}$, we must have $k_0=0$ and 
\[
\prod_j \left(-\lambda_j\right)^{n_j} = 0,
\]
i.e. $\lambda_j = 0$ for some $j$, i.e.
$0$ lies in the support of $D$.
Now $P_D W(s) = k_1$ is a constant, so
\[
W(s) \in V_{D+[0]},
\]
i.e.
\[
W(s) = f(s) + ks^{n_0},
\]
for some $f \in V_D$. 
We can replace our developing system
$\left(h,\delta\right)$ by the equivalent
system $\left(ghg^{1},g\delta\right)$
for any $g \in G_D$, and thereby arrange
$f=0$, so
\[
\delta(s,t)
=
\left(s,t+ks^{n_0}\right).
\]
\end{proof}

\section{\texorpdfstring{$G'_D$ on tori}{GprimeD on tori}}%
\label{section:GDprimeOntori}

\begin{example}\label{example:TorusExp}
Suppose that $D$ is an effective divisor on $\C{}$.
Let $G_0 = \left(\C{2},+\right)$, and
$H_0=\left\{0\right\} \subset G_0$.
Define
\[
\delta \colon (s,t) \in \C{2} \to \left(s,e^t\right) \in \C{2}, 
\]
and
\[
h \colon \left(\lambda,\mu\right) \in G_0 \to 
\left(\lambda,e^{\mu},0\right) \in
G_D'.
\]
Then $\left(h,\delta\right)$ is an inducing morphism.
Consequently every holomorphic $\left(G_0,\C{2}\right)$-structure, i.e. translation structure on a complex surface, induces a holomorphic 
$\left(G'_D,\C{2}\right)$-structure.
In particular, every complex torus bears a holomorphic
$\left(G'_D,\C{2}\right)$-structure.
\end{example}

\begin{example}\label{example:TorusZetaPrime}
Suppose that $D$ is an effective divisor on $\C{}$.
Pick a complex number $a$ in the support of $D$.
Let $n_a$ be the order of $a$ in $D$.
Let $G_0 = \left(\C{2},+\right)$, and
$H_0=\left\{0\right\} \subset G_0$.
Pick any $k \in \C{}$.
For any $\left(\lambda,\mu\right) \in \C{2}$, let
\[
f_{\lambda,\mu}(z)
=
e^{az}
\left(
  \mu + k\left(z^{n_a}-\left(z-\lambda\right)^{n_a}\right)
\right).
\]
Define a morphism of complex Lie groups
\[
\mapto[h]%
{\left(\lambda,\mu\right) \in \C{2}=G_0}%
{
\left(
\lambda,e^{a \lambda},f_{\lambda,\mu}
\right) \in G'_D
}.
\]
This morphism induces the biholomorphism
\[
\mapto[\delta]
{
\left(s,t\right) \in \C{2}=G_0/H_0
}
{
\left(
	s,
	e^{as}
	\left(
		t + ks^{n_a}
	\right)
\right) 
\in 
\C{2}=G'_D/H'_D
}
\]
so that $\left(h,\delta\right)$ is an inducing morphism.
Consequently every holomorphic $\left(G_0,\C{2}\right)$-structure, i.e. translation structure on a complex surface, induces a holomorphic 
$\left(G'_D,\C{2}\right)$-structure.
In particular, every complex torus bears a 
$\left(G'_D,\C{2}\right)$-structure.
Two induced $\left(G'_D,\C{2}\right)$-structures
are conjugate if and only if they are induced
from conjugate inducing morphisms. We will prove 
below that all $\left(G'_D,\C{2}\right)$-structures on
the torus are induced by such an inducing morphism
(except those induced as in example~\vref{example:TorusExp}).
So the $\left(G'_D,\C{2}\right)$-structures
on any complex 2-torus $\C{2}/\Lambda$
are identified, modulo conjugation and
modulo the choice of affine coordinates on 
the universal covering space of the torus, with
\[
* \sqcup \bigsqcup_{a \in \operatorname{supp} D} V_D/\left(\frac{d}{dz}-a\right)V_D,
\]
a disjoint union of 1-dimensional complex vector spaces
and one point for example~\vref{example:TorusExp}.
(The same moduli space parameterizes the inducing morphisms
$\left(G_0,\C{2}\right) \to \left(G'_D,\C{2}\right)$.) Note that this moduli space is independent of the choice of 
complex 2-torus, but depends on the support
of the divisor $D$.
\end{example}

\begin{proposition}\label{proposition:GPrimeDtorus}
Suppose that $S$ is a complex torus bearing a 
$\left(G'_D,\C{2}\right)$-structure for some effective divisor
$D$ on $\C{}$. Then,
for some
choice of affine coordinates on the universal
covering space of $S$, 
 the $\left(G'_D,\C{2}\right)$-structure is
that of example~\vref{example:TorusExp} or
one of those in example~\vref{example:TorusZetaPrime}.
\end{proposition}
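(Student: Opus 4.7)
The plan is to follow the strategy of Proposition~\ref{proposition:Torus}, now keeping track of the multiplicative character in the holonomy. Writing $S = \C{2}/\Lambda$, I would write the developing map as $\delta(s,t) = (z(s,t), w(s,t))$ on the universal cover and the holonomy as $h(\lambda,\mu) = (t_{\lambda,\mu}, \rho_{\lambda,\mu}, f_{\lambda,\mu}) \in G'_D$. Since $dz$ descends to a holomorphic 1-form on the torus, it is a constant combination of $ds$ and $dt$; a linear change of affine coordinates on the cover reduces me to $z = s$, forcing $t_{\lambda,\mu} = \lambda$. Because $\delta$ is a local diffeomorphism, $\partial_t w$ is nowhere zero, and differentiating the equivariance in $t$ gives $\partial_t w(s+\lambda, t+\mu) = \rho_{\lambda,\mu}\,\partial_t w(s,t)$. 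The form $d\log\partial_t w$ is then $\Lambda$-invariant holomorphic on the cover, so equals $\alpha\,ds + \beta\,dt$ for constants $\alpha,\beta$; integration gives $\partial_t w = C\,e^{\alpha s + \beta t}$ with $C \neq 0$ and $\rho_{\lambda,\mu} = e^{\alpha\lambda + \beta\mu}$. Let $\Lambda_1 \subset \C{}$ denote the projection of $\Lambda$ to the first factor; since $\Lambda$ has rank four and spans $\C{2}$ over $\R{}$, the group $\Lambda_1$ is either a rank-two lattice or dense in $\C{}$.

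If $\beta = 0$, then $w(s,t) = Ce^{\alpha s}t + W(s)$ for some entire $W$. Substituting into the equivariance and matching the coefficient of $t$ produces
\[
f_{\lambda,\mu}(z) = W(z) - e^{\alpha\lambda}W(z-\lambda) + C\mu\,e^{\alpha z}.
\]
Choosing some $(\lambda,\mu) \in \Lambda$ with $\mu \neq 0$ forces $e^{\alpha z} \in V_D$, so $\alpha$ lies in the support of $D$; set $a = \alpha$ with multiplicity $n_a$. Applying $P_D$ to the relation $W(z) - e^{a\lambda}W(z-\lambda) \in V_D$ shows that $g := P_D W$ satisfies $g(z+\lambda) = e^{a\lambda}g(z)$ for every $\lambda \in \Lambda_1$, so $g(z)e^{-az}$ is a $\Lambda_1$-invariant entire function on $\C{}$, hence constant. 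Solving $P_D W = c\,e^{az}$ then yields $W(z) = k z^{n_a}e^{az} + h(z)$ with $h \in V_D$; conjugating the developing system by $(0,1,-h) \in G'_D$ and rescaling $t$ delivers the developing map of Example~\ref{example:TorusZetaPrime}.

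If $\beta \neq 0$, the affine change of cover coordinates $(s,t) \mapsto \bigl(s,\,(t - \alpha s - \log(C/\beta))/\beta\bigr)$ would reduce the developing map to $\delta(s,t) = (s, e^t + W(s))$, after which I relabel the new lattice as $\Lambda$ and find the character $\rho_{\lambda,\mu} = e^\mu$. The equivariance reads $W(z+\lambda) - e^\mu W(z) = f_{\lambda,\mu}(z+\lambda) \in V_D$. If some $(0,\mu_0) \in \Lambda$ has $e^{\mu_0} \neq 1$, then $(1-e^{\mu_0})W \in V_D$ forces $W \in V_D$, and conjugation by $(0,1,-W) \in G'_D$ absorbs it to produce Example~\ref{example:TorusExp}. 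Otherwise every $\nu$ with $(0,\nu) \in \Lambda$ lies in $2\pi i\,\Z{}$, so the kernel of $\Lambda \to \Lambda_1$ has rank at most one, $\Lambda_1$ has rank at least three and is dense in $\C{}$, and $\chi(\lambda) := e^\mu$ descends to a well-defined character of $\Lambda_1$.

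The main obstacle will be ruling out a nonzero $W$ in this last subcase. Set $g := P_D W$. If $g$ were nonzero, its zero set would be a closed $\Lambda_1$-invariant subset of $\C{}$, hence empty by density of $\Lambda_1$; then $d\log g$ would be a $\Lambda_1$-invariant holomorphic 1-form on $\C{}$, constant by density, giving $g(z) = c\,e^{\gamma z}$ for some $\gamma$ satisfying $e^\mu = e^{\gamma\lambda}$ for every $(\lambda,\mu) \in \Lambda$. This would force the $\Z{}$-linear map $(\lambda,\mu) \mapsto \mu - \gamma\lambda$ to send all of $\Lambda$ into $2\pi i\,\Z{}$, so its kernel $\Lambda \cap \{t = \gamma s\}$ would have $\Z{}$-rank at least three. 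But that kernel sits inside the complex line $\{t = \gamma s\} \subset \C{2}$, a real $2$-dimensional subspace, so as a discrete subgroup it has rank at most two---a contradiction. Therefore $g = 0$, $W \in V_D$, and the structure reduces to Example~\ref{example:TorusExp}.
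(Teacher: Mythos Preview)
Your strategy matches the paper's: reduce to $z=s$, study $\partial_t w$ via its logarithmic derivative, and split on whether $\beta$ (the paper's $b$) vanishes. Two points deserve comment.

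\textbf{A gap in the $\beta=0$ case.} Your assertion that ``choosing some $(\lambda,\mu)\in\Lambda$ with $\mu\ne 0$ forces $e^{\alpha z}\in V_D$'' is not justified as written. Your formula for $f_{\lambda,\mu}$ has the shape $C\mu\,e^{\alpha z}+(\text{term depending only on }\lambda)$, so to isolate $e^{\alpha z}$ you would need two lattice elements with the same first coordinate and distinct second coordinates; as you yourself noted, the projection $\Lambda\to\Lambda_1$ may be injective, so no such pair need exist. The fix is to apply $P_D$ first: setting $g=P_D W$ and $G(z)=e^{-\alpha z}g(z)$, the relation becomes $G(z+\lambda)-G(z)=-C\mu\,p(\alpha)$ for all $(\lambda,\mu)\in\Lambda$, where $p$ is the monic polynomial of $D$. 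Then $G'$ is $\Lambda_1$-periodic entire, hence constant, so $G(z)=az+b$ and $a\lambda+Cp(\alpha)\mu=0$ for every $(\lambda,\mu)\in\Lambda$. Since $\Lambda$ spans $\C{2}$, this forces $a=0$ and $p(\alpha)=0$, i.e.\ $\alpha\in\operatorname{supp}D$. (This is exactly what the paper does, phrased via $P_{D_a}$.) After this, your remaining steps in the $\beta=0$ case are fine.

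\textbf{A simpler route in the $\beta\ne 0$ case.} Your argument here is correct but more involved than necessary. After you reach $g:=P_D W$ with $g(z+\lambda)=e^{\mu}g(z)$, you can avoid the subcase split and the density/rank bookkeeping: view $g$ (constant in $t$) as a holomorphic section of the flat line bundle on the compact torus $S$ with character $(\lambda,\mu)\mapsto e^{\mu}$; any nonzero such section is nowhere vanishing, and then $d\log g$ descends to a holomorphic $1$-form on $S$, hence $g=ce^{\gamma s}$. From $e^{\gamma\lambda}=e^{\mu}$ for all $(\lambda,\mu)\in\Lambda$, the function $e^{\gamma s-t}$ is $\Lambda$-invariant, so it descends to a holomorphic function on the compact torus $S$ and must be constant---an immediate contradiction. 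This one-line trick (which is the paper's argument) replaces your rank-three-in-a-real-plane contradiction.
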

\begin{proof} 
Pick any $\left(G'_D,\C{2}\right)$-structure
on any complex torus $S=\C{2}/\Lambda$. Write the developing map
as
\[
\delta(s,t)=\left(z(s,t),w(s,t)\right)
\]
and the holonomy morphism as
\[
h\left(\lambda,\mu\right)
=
\left(t_{\lambda,\mu},g_{\mu},f_{\lambda,\mu}\right),
\]
for each 
\[
\left(\lambda,\mu\right) \in \Lambda.
\]
So
\[
z\left(s+\lambda,t+\mu\right)
=
z(s,t)+t_{\lambda,\mu}.
\]
Consequently, $dz$ is a holomorphic 1-form on $S$, so
\[
dz = a \, ds + b \, dt
\]
for some complex numbers $a, b \in \C{2}$. We can
arrange, by linear change of variables on 
$\C{2}=\tilde{S}$ that $z=s$. Therefore
$t_{\lambda,\mu}=\lambda$. So $w(s,t)$ must satisfy
\[
w\left(s+\lambda,t+\mu\right)
=
g_{\mu} w(s,t)+f_{\lambda,\mu}\left(s+\lambda\right),
\]
for every $\left(\lambda,\mu\right) \in \Lambda$
and $\left(s,t\right) \in \C{2}$.
So
\[
\pd{w}{t}\left(s+\lambda,t+\mu\right)
=
g_{\mu} \pd{w}{t}(s,t).
\]
If $h=\pd{w}{t}$, then $dh/h$ is a holomorphic
$1$-form on $S$, so 
\[
\frac{dh}{h} = a \, ds + b \, dt,
\]
say. Therefore
\[
h(s,t) = Ae^{as + bt},
\]
for some constant $A \in \C{\times}$. 
This forces
\[
g_{\lambda,\mu} = e^{a \lambda + b \mu}
\]
for every $\left(\lambda,\mu\right) \in \Lambda$.
Integrate:
\[
w(s,t)
=
e^{as}
\begin{cases}
W(s) + B e^{bt}, & \text{ if } b \ne 0,
\\
W(s) + B t, & \text{ if } b = 0,
\end{cases}
\]
for some constant $B \in \C{\times}$
and some holomorphic function $W(s)$.
We can rescale and translate $t$ to arrange
i.e.
\[
w(s,t)
=
e^{as}
\begin{cases}
W(s) + e^t, & \text{ or }
\\
W(s) + t. &
\end{cases}
\]

Suppose for the moment that $w(s,t)=e^{as}\left(W(s)+Be^t\right)$.
The holonomy equivariance of $\delta$ is then
precisely
\[
e^{a(s+\lambda)}
\left(
W\left(s+\lambda\right) - e^{\mu} W(s)
\right) = f_{\lambda,\mu}(s+\lambda).
\]
Let $P_D$ be the differential operator
\[
P_D = \prod_j \left(\partial_s - \lambda_j\right)^{n_j}.
\]
So a holomorphic function $F(s)$ lies in $V_D$ just 
when $P_D F(s)=0$. Applying $P_D$ to both sides,
\[
P_D e^{as} W \left(s+\lambda\right) = e^{\mu} P_D e^{as} W \left(s\right).
\]
Suppose that 
\[
D = \sum_j n_j \left[\lambda_j\right].
\]
Let
\[
D_a= \sum_j n_j \left[\lambda_j-a\right].
\]
Then for any holomorphic function $f(s)$,
\[
P_D \left(e^{as} f(s)\right)
=
e^{as} P_{D_a} f(s).
\] 
So
\[
P_{D_a} W\left(s+\lambda\right)
=
e^{\mu} P_{D_a}W(s).
\]
Let $F=P_{D_a} W$:
\[
F(s+\lambda)=e^{\mu} F(s),
\]
for every $\left(\lambda,\mu\right) \in \Lambda$.
So $F$ is a holomorphic section of a flat line
bundle on a complex torus.
Any holomorphic section of a degree $0$
line bundle must be everywhere $0$,
unless the bundle is holomorphically trivial,
in which case the holomorphic
sections are everywhere $0$ or 
everywhere nonzero. So 
$F$ is either
everywhere $0$ or everywhere
nonzero. 
If $F$ is everywhere nonzero,
then
\(
\frac{dF}{F}
\)
is a holomorphic 1-form,
so
\[
\frac{dF}{F} = p \, ds + q \, dt,
\]
for some constants $p, q \in \C{}$.
But $F=F(s)$ is independent of $t$, so
\[
\frac{dF}{F} = p \, ds.
\]
Therefore
\[
F(s) = C e^{ps},
\]
for some constant $C \ne 0$.
Therefore the holomorphic function
\[
e^{ps-t}
\]
is defined on $S$, and so must be constant,
a contradiction to our hypothesis that $F \ne 0$. 
Therefore $F=0$.

Continuing to suppose that $w(s,t)=e^{as}\left(W(s)+e^t\right)$,
we can say that $F=0$ i.e. $P_{D_a} W=0$.
By replacing the developing map and holonomy
by action of an element of $G_D$, we 
can arrange that $W(s)=0$, i.e. 
\[
\delta(s,t)
=
\left(s,e^t\right).
\]
The holonomy morphism must be
\[
h\left(\lambda,\mu\right)
=
\left(\lambda,e^{\mu},0\right).
\]

Next we can suppose that 
\[
\left(z(s,t),w(s,t)\right)
=
\left(s,e^{as}\left(W(s)+t\right)\right).
\]
The holonomy equivariance says that
\[
e^{a(s+\lambda)} \left(W(s+\lambda)+ \mu - W(s) \right)
=f_{\lambda,\mu}(s+\lambda).
\]
Taking $P_D$ of both sides,
\begin{equation}\label{equation:ForMu}
P_{D_a} W(s+\lambda)-P_{D_a} W(s) + P_{D_a} \mu = 0.
\end{equation}
As above, $P_{D_a} W'(s)$ is a holomorphic
function on $S$ so constant, say
\[
P_{D_a} W'(s) = k_0.
\]
So
\[
P_{D_a} W(s) = k_0 s + k_1
\]
for some constants $k_0, k_1 \in \C{}$.
Plugging this in to equation~\ref{equation:ForMu},
\[
0 = 
k_0 \lambda + (-1)^{\deg D}\prod_j \left(\lambda_j-a\right)^{n_j} \mu.
\]
This linear equation between $\lambda$ and $\mu$ holds
for every $\left(\lambda,\mu\right) \in \Lambda$.
Since $\Lambda \subset \C{2}$ is a lattice,
there is no linear relation satisfied by its elements,
so $k_0=0$ and $\lambda_j=a$ for some $j$.
Moreover $P_{D_a} W(s) = k_1$ for some constant $k_1$,
i.e. $P_{D_a+[0]}W(s)=0$.
Therefore
\[
W(s) = e^{-as} f(s) + ks^{n_a},
\]
for a unique $f \in V_D$, where $n_a$ is the order of $a$ in $D$.
We can conjugate the holonomy to arrange that $f=0$.
\end{proof}

\section{Primary Kodaira surfaces}%
\label{subsection:EllipticElliptic}

A \emph{primary Kodaira surface} is 
a compact complex surface $S$ of
odd first Betti number which occurs as
the total space of a principal bundle
\[
\BundleCommutativeDiagram{E_1}{S}{E_0}{}{}
\]
where $E_0$ and $E_1$ are elliptic curves.
The canonical bundle of $S$ is trivial
\cite{Barth/Hulek/Peters/VanDeVen:2004} p. 147. Up to finite covering, all
holomorphic elliptic curve fibrations over an elliptic
curve base are principal 
\cite{Barth/Hulek/Peters/VanDeVen:2004} p. 147.
Kodaira \cite{Kodaira:1964} p. 788 theorem 19 shows
that primary Kodaira surfaces 
have the form $S=\Gamma\backslash\C{2}$ where $\Gamma$ is a discrete group
of affine transformations acting properly discontinuously
without fixed points on $\C{2}$, preserving $dz_1 \wedge dz_2$.
The classification of Suwa \cite{Suwa:1975}
pp. 247--249 says that
a compact complex surface of the
form $\Gamma \backslash \C{2}$ is an elliptic fiber bundle
with nonzero first Chern class if and only if
it has first Betti number 3.  
The description $S=\Gamma \backslash \C{2}$ makes explicit
that $S$ has an affine structure. 
It is convenient to write each affine
transformation \mapto{z}{az+b} as
a matrix
\[
\begin{pmatrix}
a_{11} & a_{12} & b_1 \\
a_{21} & a_{22} & b_2 \\
0 & 0 & 1
\end{pmatrix}.
\]
In fact, Suwa
proves that we can arrange all of the elements of
$\Gamma$ to be affine transformations of the form
\[
g=
\begin{pmatrix}
1 & g_1 & g_2 \\
0 & 1 & g_3 \\
0 & 0 & 1
\end{pmatrix},
\]
after perhaps replacing $S$ by a finite covering space.
Every element of $\Gamma$ clearly preserves the
holomorphic foliation $dz_2=0$. 
The leaves of this foliation
quotient to become the fibers of the elliptic surface $S$.

Fillmore and Schueneman \cite{Fillmore/Scheuneman:1973} main theorem,
Vitter \cite{Vitter:1972} p. 238 and
D{\"u}rr \cite{Durr:2005} lemma 4.8
provide the following even more explicit description.
Suppose that $S$ is a primary Kodaira surface
\[
\BundleCommutativeDiagram{E_1}{S}{E_0}{}{}
\]
where $E_0$ and $E_1$ are elliptic curves.
The fundamental group of $S$ admits 
a presentation
\[
\fundgp{S} = \Gamma =
\Presentation{a, b, c, d}%
{c, d \text{ central}, \left[a,b\right]=c^r}
\]
where $r$ is a positive integer.
The kernel of the morphism $\fundgp{S} \to \fundgp{E_0}$
is precisely the center of $\fundgp{S}$.
The surface $S$ is explicitly the quotient
$\Gamma\backslash \C{2}$, where $a, b, c, d \in \Gamma$
act as the affine transformations
\[
a =
\begin{pmatrix}
1 & a_1 & 0 \\
0 & 1 & 1 \\
0 & 0 & 1
\end{pmatrix}, \
b =
\begin{pmatrix}
1 & b_1 & 0 \\
0 & 1 & b_3 \\
0 & 0 & 1
\end{pmatrix}, \
c =
\begin{pmatrix}
1 & 0 & c_2 \\
0 & 1 & 0 \\
0 & 0 & 1
\end{pmatrix}, \
d =
\begin{pmatrix}
1 & 0 & 1 \\
0 & 1 & 0 \\
0 & 0 & 1
\end{pmatrix},
\]
for some complex numbers $a_1, b_1, c_1, b_3$.
The equation $[a,b]=c^r$ forces precisely
$b_1 = a_1 b_3 - r c_2$.
Moreover we can arrange that $c_2$ and $b_3$
both lie in the upper half plane. (We can
even arrange that $c_2$ and $b_3$ both
lie in a given fundamental domain for the
modular group in the upper half plane.)

Conversely, for any choice of complex constants
$a_1, c_2, b_3$, if we set 
$b_1 = a_1 b_3 - r c_2$, and if
$c_2$ and $b_3$ both lie in the upper
half plane, then we can define $a,b,c,d$ and $\Gamma$
as above, and the surface $S=\Gamma\backslash\C{2}$
is a primary Kodaira surface.

\begin{example}\label{example:ExtraFreedom}
For our purposes, we will need to add in two
extra parameters to this family: every primary Kodaira surface
can be written in many ways as the
the quotient
$\Gamma\backslash \C{2}$, where $a, b, c, d \in \Gamma$
act as the affine transformations
\[
a =
\begin{pmatrix}
1 & a_1 & 0 \\
0 & 1 & a_3 \\
0 & 0 & 1
\end{pmatrix}, \
b =
\begin{pmatrix}
1 & b_1 & 0 \\
0 & 1 & b_3 \\
0 & 0 & 1
\end{pmatrix}, \
c =
\begin{pmatrix}
1 & 0 & c_2 \\
0 & 1 & 0 \\
0 & 0 & 1
\end{pmatrix}, \
d =
\begin{pmatrix}
1 & 0 & d_2 \\
0 & 1 & 0 \\
0 & 0 & 1
\end{pmatrix},
\]
for some complex numbers $a_1, a_3, b_1, b_3, c_2, d_2$,
with $a_1 b_3 - b_1 a_3 = r c_2$, where $a_3, b_3$ are
$\R{}$-linearly independent, and $c_2, d_2$ are
$\R{}$-linearly independent. This extra freedom will
allow us to normalize various constants appearing later on.
\end{example}

\section{%
\texorpdfstring%
{$G_D$ and primary Kodaira surfaces}%
{GD and primary Kodaira surfaces}%
}%
\label{section:GDonKodaira}

\begin{example}\label{example:Vitter}
Let $G_0$ be the group of all complex 
affine transformations of the form
\[
g=
\begin{pmatrix}
1 & g_1 & g_2 \\
0 & 1 & g_3 \\
0 & 0 & 1
\end{pmatrix}
\]
for any $g_1, g_2, g_3 \in \C{}$, and let $G_0$
act on $\C{2}$ as complex affine transformations
\[
\mapto{\left(z_1,z_2\right)}{\left(z_1+g_1 \, z_2 + g_2, z_2+g_3\right)}.
\]
Let $H_0 \subset G_0$ be the subgroup fixing the origin in $\C{2}$.
By construction, every primary Kodaira surface has a 
holomorphic $\left(G_0,\C{2}\right)$-structure with developing
map the identity $\C{2} \to \C{2}$ and 
and holonomy morphism the inclusion $\Gamma \to G_0$. 
We leave the reader to check
that the group $\Gamma=\left<a,b,c,d\right> \subset G_0$ is 
dense in the complex algebraic Zariski topology of $G_0$,
so that there is no complex analytic reduction
of structure group of this structure to any
complex algebraic proper subgroup of $G_0$.
\end{example}

\begin{remark}
The real Lie subgroup of $G_0$ consisting of the
matrices
\[
\begin{pmatrix}
1 & g_1 & g_2 \\
0 & 1 & -\sqrt{-1}\bar{g}_1 \\
0 & 0 & 1
\end{pmatrix}
\]
acts transitively on $\C{2}$, and contains a conjugate of
each subgroup $\Gamma$ defined
in example~\vref{example:ExtraFreedom}, 
so we can reduce the structure group $G_0$ this
real Lie group. 
\end{remark}

\begin{example}
Define a complex Lie group isomorphism
\[
\mapto[h]%
{%
g=
\begin{pmatrix}
1 & g_1 & g_2 \\
0 & 1 & g_3 \\
0 & 0 & 1
\end{pmatrix}
\in G_0
}%
{
\left(g_3,g_1\left(z-g_3\right)+g_2\right) \in G_{2[0]}
}
\]
and biholomorphism
\[
\mapto[\delta]%
{
\left(z_1,z_2\right) \in \C{2}=G_0/H_0
}
{
\left(z_1, z_2\right) \in \C{2}=G_{2[0]}/H_{2[0]}
}
\]
so that $\left(h,\delta\right)$ is an inducing morphism.
Any holomorphic $\left(G_0,\C{2}\right)$-structure on any complex surface,
induces a 
holomorphic $\left(G_{2[0]},\C{2}\right)$-structure.
\end{example}

\begin{example}
We generalize the last example. Pick any constant $k \in \C{}$.
Define a complex Lie group isomorphism
\[
\mapto[h]%
{%
g=
\begin{pmatrix}
1 & g_1 & g_2 \\
0 & 1 & g_3 \\
0 & 0 & 1
\end{pmatrix}
\in G_0
}%
{
\left(g_3,\left(g_1+kg_3\right)\left(z-g_3\right) + g_2 + \frac{k}{2}g_3^2\right).
\in G_{2[0]}
}.
\]
This morphism defines a biholomorphism
\[
\mapto[\delta]%
{
\left(z_1,z_2\right) \in \C{2}=G_0/H_0
}
{
\left(z_1+\frac{k}{2} z_2^2, z_2\right) \in \C{2}=G_{2[0]}/H_{2[0]}
}
\]
so that $\left(h,\delta\right)$ is an inducing morphism.
Any holomorphic $\left(G_0,\C{2}\right)$-structure on any complex surface 
has a 1-parameter family of deformations given
by varying $k$ above. Since $G_0$ lies in the complex affine
group, any holomorphic $\left(G_0,\C{2}\right)$-structure determines a
holomorphic affine structure. Vitter \cite{Vitter:1972}
proves that every holomorphic affine structure on a
primary Kodaira surface is among one
of these, depending only on the arbitrary
choice of constant $k \in \C{}$. 

So every primary Kodaira surface
admits a 1-parameter family of 
$\left(G_{2[0]},\C{2}\right)$-structures.
Vitter proves that the induced affine structures are distinct. 
Therefore they are distinct as $\left(G_{2[0]},\C{2}\right)$-structures,
and nonisomorphic on every primary Kodaira surface.
\end{example}

\begin{lemma}
Every holomorphic $\left(G_{2[0]},\C{2}\right)$-structure
on any primary Kodaira surface is isomorphic to one of those
constructed in example~\vref{example:Vitter},
for some values of the constants 
\[
a_1, a_3, b_1, b_3, d_2, r
\]
used to construct the primary Kodaira surface 
and for some constant $k$ used to construct the
structure.
\end{lemma}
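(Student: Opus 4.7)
The plan is to work in the coordinates of example~\vref{example:ExtraFreedom}: write $S=\Gamma\backslash\C{2}$ with universal cover carrying coordinates $(z_1,z_2)$, so that each $\gamma\in\Gamma$ acts as
\[
\gamma\cdot(z_1,z_2) = (z_1 + \gamma_1 z_2 + \gamma_2,\, z_2 + \gamma_3).
\]
Express a given $\left(G_{2[0]},\C{2}\right)$-structure through its developing map $\delta(z_1,z_2)=(Z,W)$ and holonomy morphism $h(\gamma)=(t_\gamma, f_\gamma)$, noting that $V_{2[0]}$ consists of affine functions, so $f_\gamma(z) = \alpha_\gamma + \beta_\gamma z$ for scalars $\alpha_\gamma,\beta_\gamma$.

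The first step is to pin down $Z$. Equivariance gives $Z(\gamma\cdot(z_1,z_2)) = Z(z_1,z_2) + t_\gamma$, so $dZ$ descends to a holomorphic $1$-form on $S$. Under $\Gamma$ we have $\gamma^* dz_1 = dz_1 + \gamma_1\, dz_2$ and $\gamma^* dz_2 = dz_2$, so $\Gamma$-invariance combined with the existence of elements with $\gamma_1\ne 0$ forces $dZ$ to be a scalar multiple of $dz_2$. Thus $Z$ is a function of $z_2$ alone, and since $\delta$ is a local diffeomorphism, $Z$ must be affine in $z_2$. Using the rescaling/translation freedoms of example~\vref{example:ExtraFreedom} together with a translation by $G_{2[0]}$, I would normalize so that $Z=z_2$; then $t_\gamma=\gamma_3$ for every $\gamma$.

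Next, pin down $W$. Differentiating the equivariance $W(\gamma\cdot(z_1,z_2)) = W + f_\gamma(z_2+\gamma_3)$ in $z_1$ shows that $\partial_{z_1}W$ is $\Gamma$-invariant, hence a holomorphic function on the compact surface $S$, hence a constant $A$, which is nonzero since $\delta$ is a local diffeomorphism. Thus $W = A z_1 + \tilde W(z_2)$ for some holomorphic function $\tilde W$. Differentiating the equivariance twice in $z_2$ yields $\tilde W''(z_2+\gamma_3) = \tilde W''(z_2)$; since the values of $\gamma_3$ coming from $\gamma=a$ and $\gamma=b$ are $\R{}$-linearly independent and span the lattice of the base elliptic curve, $\tilde W''$ is a doubly periodic entire function, hence a constant $k$. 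So $W = A z_1 + \tfrac{k}{2}z_2^2 + B z_2 + C$ for constants $A,B,C,k$.

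Finally, I would absorb $A$, $B$, $C$: a linear rescaling of $z_1$ (permitted by example~\vref{example:ExtraFreedom}) normalizes $A=1$, and conjugation of the developing system by a suitable element of $G_{2[0]}$ kills $B$ and $C$. What remains is exactly the developing map of example~\vref{example:Vitter} with parameter $k$, and substituting back into the equivariance equation recovers the prescribed holonomy morphism. The main obstacle is the bookkeeping of normalizations: one must verify that the freedoms of example~\vref{example:ExtraFreedom} together with $G_{2[0]}$-conjugation can be applied compatibly, so that after fixing $Z=z_2$ there remains enough residual freedom to absorb $A$, $B$, $C$ without disturbing the earlier normalization. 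The analytic content --- holomorphic invariants on a compact complex surface are constants, and entire doubly periodic functions are constants --- is routine.
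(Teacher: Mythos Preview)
Your approach is correct but differs from the paper's proof. The paper argues by comparing an arbitrary structure to the standard one ($k=0$) through the invariant geometric objects attached to a $\left(G_{2[0]},\C{2}\right)$-structure: it invokes lemma~\ref{lemma:TransverseTranslation} to force the two invariant foliations to coincide (the elliptic fibration is the unique holomorphic foliation on a primary Kodaira surface), then matches the invariant vector fields and 1-forms up to scale, and finally appeals to Vitter's classification of holomorphic affine structures on primary Kodaira surfaces to align the underlying affine structures before comparing charts directly. By contrast, you work directly with the developing map and reduce everything to the two Liouville-type facts that global holomorphic functions on $S$ are constant and that entire doubly-periodic functions are constant. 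Your route is more elementary in that it avoids any appeal to Vitter's theorem, and it is essentially the specialization to $D=2[0]$ of the paper's own argument for lemma~\ref{lemma:PrimaryKodaira1.5}. The paper's route, on the other hand, explains conceptually why the answer must look like Vitter's list: the $G_{2[0]}$-structure carries an underlying affine structure, and once the invariant tensor fields are matched there is nothing left to vary except the affine parameter $k$.

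One point in your write-up that deserves a sentence of justification: when you assert that $\Gamma$-invariance forces $dZ$ to be a scalar multiple of $dz_2$, you are implicitly using that the coefficient of $dz_1$ in an invariant holomorphic 1-form must itself be $\Gamma$-invariant (hence constant), and then that the relation $a_1 b_3 - b_1 a_3 = r c_2 \ne 0$ rules out any nonzero constant coefficient on $dz_1$. This is straightforward, but the phrase ``existence of elements with $\gamma_1\ne 0$'' alone does not quite capture it. Apart from this, the bookkeeping of normalizations you flag as the main obstacle is indeed routine: rescaling $z_2$ fixes $Z=z_2$ and leaves the $z_1$-rescaling free to absorb $A$, after which a $G_{2[0]}$-conjugation absorbs $B$ and $C$ without disturbing $Z$.
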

\begin{proof}
Take the standard structure, say $\left\{f_{\alpha}\right\}$,
as defined in example~\ref{example:Vitter}, i.e. the 
one with $k=0$.
Then consider some other structure, say $\left\{g_{\beta}\right\}$.
By lemma~\vref{lemma:TransverseTranslation},
we will see that the induced foliations
of the two structures must be identical,
because the only holomorphic foliation of
any primary Kodaira surface is the fibration
by elliptic curves. Every holomorphic vector
field on any primary Kodaira surface is
a constant multiple of the vector field
generating the principal bundle action.
Therefore the holomorphic vector fields
corresponding to $\partial_w$ in $G_{2[0]}/H_{2[0]}$
must be constant multiples of one another.
Similarly the holomorphic 1-forms corresponding
to $dz$ in the model must agree up to constant
multiple, since they have to vanish on the leaves of 
the same fibration. We can also assume, following
Vitter's classification of affine structures
on primary Kodaira surfaces
(and perhaps after deforming
the other structure by precisely one of
the inducing morphisms in Vitter's deformation)
that the two structures induce
the same affine structure.

Pick $p, q \ne 0$ any complex constants.
We can change variables according to
\[
\left(Z,W\right)=\left(pz,qw\right)
\]
and transform $G_{2[0]}$ by the morphism
\[
\left(t',f'\right)=\left(pt,qf\left(z/p\right)\right).
\]
This will then alter the values of the constants
used to construct the primary Kodaira surface, by
\begin{align*}
a_1 &\mapsto \frac{q}{p} a_1, \\
b_1 &\mapsto \frac{q}{p} b_1, \\
c_2 & \mapsto q c_2, \\
d_2 & \mapsto q d_2, \\
a_3 & \mapsto p a_3, \\
b_3 & \mapsto p b_3.
\end{align*}
For suitable choice of $p$ and $q$
this rescaling will ensure that these holomorphic vector fields
and 1-forms match precisely, and require us only
to change the choices of the constants used
to define the primary Kodaira surface. So 
we can assume that these vector fields and
1-forms agree, and that the affine structure
agrees. Take $(z,w)$
a local chart for the standard structure,
and $(Z,W)$ a local chart for the other
structure. Since they have the same affine
structure, the matrix
\[
\begin{pmatrix}
\pd{Z}{z} & \pd{Z}{w} \\
\pd{W}{z} & \pd{W}{w}
\end{pmatrix}
\]
is a constant invertible matrix. Since
the 1-forms match, $dZ=dz$, so $Z=z+c$.
Since the vector fields match,
$\partial_W=\partial_w$, so 
$\partial_w W=1$. Therefore
\[
Z=z+c, W=w+az,
\]
which is a chart for the standard structure.
\end{proof}

\begin{example}\label{example:KodairaGD}
Pick any constant $k \in \C{}$,
and any integer $n \ge 2$.
Let
\[
(t,f) = \left(0,\frac{k}{n}z^n\right) \in G_{(n+1)[0]}.
\]
Define a complex Lie group homomorphism
\[
\map[h=\Ad(t,f)]%
{%
\left(t_1,f_1\right)
\in
G_{2[0]}
}%
{
\left(
	t_1,
	f_1(z)+\frac{k}{n}
		\left(
			z^n-
			\left(
				z-t_1
			\right)^n
		\right)
\right)
\in
G_{n[0]}
}.
\]
(N.B. the cancellation of the $z^n$ leading terms here
ensures that this morphism of complex Lie groups
is valued in $G_{n[0]}$, \emph{not} merely in $G_{(n+1)[0]}$.)
This morphism defines a biholomorphism
\[
\mapto[\delta=(t,f)]{\left(z,w\right) \in \C{2}=G_{2[0]}/H_{2[0]}}%
{\left(z,w+\frac{k}{n} z^n\right) \in \C{2}=G_{n[0]}/H_{n[0]}}
\]
so that $\left(h,\delta\right)$ is an inducing morphism
of homogeneous spaces.
Any holomorphic $\left(G_{2[0]},\C{2}\right)$-structure on any 
complex surface induces a 1-parameter family of holomorphic 
$\left(G_{n[0]},\C{2}\right)$-structures by varying the parameter $k$. 
Clearly this generalizes Vitter's construction
of affine structures from example~\vref{example:Vitter},
which is precisely the case $n=2$.
\end{example}

\begin{lemma}\label{lemma:PrimaryKodaira1.5}
Pick an effective divisor $D$ on $\C{}$.
Suppose that $S$ is a primary Kodaira
surface bearing a 
$\left(G_D,\C{2}\right)$-structure. 
Then the $\left(G_D,\C{2}\right)$-structure is
a unique one of those in example~\vref{example:KodairaGD},
up to holomorphic isomorphism.
\end{lemma}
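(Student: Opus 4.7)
The plan is to follow the template of Proposition~\ref{proposition:Torus}, adapted to the rigidity of a primary Kodaira surface $S = \Gamma \backslash \C{2}$ presented as in example~\ref{example:ExtraFreedom}. Write the developing map as $\delta\left(z_1,z_2\right) = \left(z\left(z_1,z_2\right), w\left(z_1,z_2\right)\right)$ and the holonomy as $h\left(\gamma\right) = \left(t_\gamma, f_\gamma\right)$. Two facts about $S$ do most of the preliminary work: the only holomorphic foliation of $S$ is the elliptic fibration $dz_2 = 0$ (by lemma~\ref{lemma:TransverseTranslation}) and the space of holomorphic $1$-forms on $S$ is one-dimensional, spanned by $dz_2$. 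Matching the preserved foliation $z = \text{const}$ on the model with $z_2 = \text{const}$ on $\tilde{S}$, and matching $dz$ with a scalar multiple of $dz_2$, I can, after rescaling and translating $z_2$ using the freedom of example~\ref{example:ExtraFreedom}, arrange $z\left(z_1,z_2\right) = z_2$. The $G_D$-invariant holomorphic volume form $dz \wedge dw$ then pulls back to $-(\partial w / \partial z_1)\, dz_1 \wedge dz_2$; since the canonical bundle of $S$ is trivial and generated by $dz_1 \wedge dz_2$, the function $\partial w / \partial z_1$ descends to $S$, hence is constant. A further rescaling of $z_1$ normalizes this constant to $1$, giving $w\left(z_1,z_2\right) = z_1 + W(z_2)$ for some entire function $W$.

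Next I impose equivariance under each of the generators $a, b, c, d$ of $\Gamma$. Since $c$ and $d$ act only by translation in $z_1$, equivariance forces $f_c\left(z_2\right) = c_2$ and $f_d\left(z_2\right) = d_2$, two constant elements of $V_D$. As $c_2$ and $d_2$ are $\R{}$-linearly independent, neither vanishes, so $V_D$ must contain nonzero constants, i.e.\ $p(0) = 0$. Equivariance under $a$ yields
\[
a_1 z_2 + W\left(z_2 + a_3\right) - W\left(z_2\right) = f_a\left(z_2 + a_3\right) \in V_D,
\]
and similarly for $b$. Applying the annihilator $P_D$ and using $p(0) = 0$, I obtain
\[
F\left(z_2 + a_3\right) - F\left(z_2\right) = -a_1 p'(0), \qquad F\left(z_2 + b_3\right) - F\left(z_2\right) = -b_1 p'(0),
\]
where $F := P_D W$. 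Hence $F'$ is bi-periodic with the $\R{}$-linearly independent periods $a_3, b_3$, so constant; writing $F\left(z_2\right) = \beta z_2 + \gamma$ and substituting back, the identity $a_1 b_3 - a_3 b_1 = r c_2 \ne 0$ from example~\ref{example:ExtraFreedom} forces $p'(0) = 0$, so $0$ has multiplicity $n_0 \ge 2$ in $D$, and then $\beta = 0$, i.e.\ $F$ is a constant.

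From $P_D W$ being constant and $0$ a root of $p$ of multiplicity $n_0 \ge 2$, I deduce $W \in V_{D + [0]}$, and the quotient $V_{D + [0]}/V_D$ is one-dimensional, spanned by the class of $z^{n_0}$. The residual gauge freedom in the developing system is conjugation by $G_D$; conjugating by the subgroup $\left\{(0,f) : f \in V_D\right\} \subset G_D$ translates $W$ by an arbitrary element of $V_D$, so I may normalize $W\left(z_2\right) = k z_2^{n_0}$ for a unique $k \in \C{}$. The resulting developing map
\[
\delta\left(z_1,z_2\right) = \left(z_2,\, z_1 + k z_2^{n_0}\right)
\]
is exactly the $\left(G_{n_0[0]},\C{2}\right)$-structure obtained from the standard $\left(G_0,\C{2}\right)$-structure of example~\ref{example:Vitter} via the inducing morphism of example~\ref{example:KodairaGD} with $n = n_0$, viewed inside $G_D$ through the inclusion $G_{n_0[0]} \subset G_D$. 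Different values of $k$ yield distinct underlying affine structures by Vitter's theorem, so the parameter is genuine. The one delicate step is the bookkeeping in the second paragraph: the identity $a_1 b_3 - a_3 b_1 = r c_2$ from the Kodaira presentation must surface at the right moment to produce both the multiplicity-two condition at $0$ and the constancy of $F$; once that is in place, the rest is linear algebra in $V_{D+[0]}/V_D$.
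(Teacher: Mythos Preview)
Your argument is correct and, in several places, more conceptual than the paper's own proof. The paper works entirely in coordinates: it first shows that $\partial z/\partial t$ is constant from equivariance, splits into the cases $\partial z/\partial t = 1$ and $\partial z/\partial t = 0$, and rules out the first by an explicit contradiction with the relation $a_1 b_3 - b_1 a_3 = r c_2$. You bypass this split by invoking lemma~\ref{lemma:TransverseTranslation} (together with the one-dimensionality of $H^0(S,\Omega^1_S)$) to force $z=z_2$ from the outset, and then use the $G_D$-invariant volume form $dz\wedge dw$ and the triviality of $K_S$ to pin down $w = z_1 + W(z_2)$; the paper reaches the same normal form by a second direct appeal to ``holomorphic on $S$ implies constant''. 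The more substantial difference is in the endgame: the paper applies $P_{D+2[0]}$ to annihilate the affine term $\gamma_1 s + \gamma_2$ outright and then runs a three-way case split on $n_0=0,1,\ge 2$, whereas you first read off $n_0 \ge 1$ from the central generators $c,d$ (an observation the paper does not isolate), apply only $P_D$, and let the surviving constant $a_1\, p'(0)$ interact with the Kodaira relation $a_1 b_3 - a_3 b_1 = r c_2 \ne 0$ to force both $p'(0)=0$ and $\beta=0$ in one stroke. Your route trades explicit case analysis for a cleaner use of the presentation of $\Gamma$; the paper's route is more self-contained and does not lean on lemma~\ref{lemma:TransverseTranslation}. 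One minor wording point: lemma~\ref{lemma:TransverseTranslation} only constrains foliations carrying a transverse translation structure, so ``the only holomorphic foliation of $S$'' is a stronger statement than what that lemma actually supplies---but since the foliation inherited from $dz$ does carry such a structure, your conclusion is unaffected.
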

\begin{proof}
Suppose that $S$ is a primary Kodaira
surface, with universal covering space
$\tilde{S}=\C{2}$, with coordinates
$(s,t)$, and each element
\[
\gamma
=
\begin{pmatrix}
1 & \gamma_1 & \gamma_2 \\
0 & 1 & \gamma_3 \\
0 & 0 & 1
\end{pmatrix} 
\in \fundgp{S}
\]
acts on $\C{2}$ as
\[
\gamma(s,t)=
\left(
s+\gamma_3,
t+\gamma_1 s+\gamma_2
\right).
\]
Suppose that $S$ has a $\left(G_D,\C{2}\right)$-structure
with developing map $\delta$ and holonomy morphism
$h$. Write $\delta=\left(z,w\right)$
and $h\left(\gamma\right)=\left(\tau_{\gamma},f_{\gamma}\right)$.
Then, for each $\gamma \in \fundgp{S}$, 
\[
z
\left(
s+\gamma_3,
t+\gamma_1 s+\gamma_3
\right)
=
z\left(s,t\right)+\tau_{\gamma}.
\]
Therefore
\(
\pd{z}{t}
\)
is a holomorphic function on $S$, so constant.
By rescaling the $t$ coordinate on $\tilde{S}$,
we can arrange that
\(
\pd{z}{t} = 1
\)
or 
\(
\pd{z}{t} = 0
\).

First, consider the case where 
\(
\pd{z}{t} = 1
\),
say
\[
z(s,t)=t+Z(s),
\]
for some holomorphic function $Z(s)$.
Equivariance of the developing map under the holonomy action 
implies
\[
Z\left(s+\gamma_3\right)
-Z(s)
=t_{\gamma} - \gamma_1 s - \gamma_2.
\]
Therefore
\(
Z''(s)
\)
is a holomorphic function on $S$, so constant,
so
\[
Z(s) = k_0 + k_1 s + k_2 s^2
\]
for some constants $k_0, k_1, k_2 \in \C{}$.
Equivariance now says
\[
2 k_2 \gamma_3 s + k_2 \gamma_3^2 + k_1 \gamma_3 
= 
t_{\gamma} 
- \gamma_1 s-\gamma_2.
\]
This holds for every $s \in \C{}$, so 
\[
2 k_2 \gamma_3 = \gamma_1 \text{ and } 
k_2\gamma_3^2 + k_1 \gamma_3 = t_{\gamma} - \gamma_2
\]
for every
\[
\gamma
=
\begin{pmatrix}
1 & \gamma_1 & \gamma_2 \\
0 & 1 & \gamma_3 \\
0 & 0 & 1
\end{pmatrix} 
\in \fundgp{S}.
\]
In particular, plugging in the generators
for $\fundgp{S}$ which we described in
example~\vref{example:ExtraFreedom},
we find
\[
2 k_2 a_3 = a_1
\]
and
\[
2 k_2 b_3 = b_1
\]
where $a_1, a_3, b_1, b_3$ are the
various constants appearing in our
construction of the primary Kodaira
surface in example~\ref{example:ExtraFreedom}.
This implies, again in terms of those constants,
\begin{align*}
rc_2 &=a_1 b_3 - b_1 a_3 \\
&= 0.
\end{align*}
But the constant $c_2$ has to be in the upper
half plane, a contradiction. Therefore
we can assume that $\pd{z}{t}=0$.

Write $z(s,t)=z(s)$. Equivariance
of the developing map under the 
action of the fundamental group
implies
\[
z
\left(s+\gamma_3\right)
=z(s)+t_{\gamma},
\]
so that $z'(s)$ is a holomorphic
function on the surface $S$, so
a constant, say $z(s)=k_0s+k_1$.
Since the developing map is a local
biholomorphism, $k_0 \ne 0$.
We can arrange by translating the
coordinates on $\tilde{S}$
that $z(s)=k_0s$. We can rescale
the affine coordinates $(s,t)$
as we like, perhaps changing the
expression of the fundamental
group as affine transformations, but
keeping the same form of a primary
Kodaira surface. Therefore we 
can arrange $k_0=1$, so $z(s)=s$.

Equivariance of the developing map
under the fundamental group action
is now expressed as the equation
\[
w
\left(
s+\gamma_3,
t+\gamma_1 s+\gamma_2
\right)
=
w(s,t)
+
f_{\gamma}\left(s+\gamma_3\right).
\]
Therefore
\(
\pd{w}{t}
\)
is a holomorphic function on $S$, so
a constant, and we can rescale the
$t$ variable to arrange that this
constant is $1$, say
\[
w(s,t)=t+W(s).
\]
Equivariance of the developing map
under the fundamental group action
is now expressed as the equation
\begin{equation}\label{equation:WhatsUp}
W\left(s+\gamma_3\right)-W(s) 
= 
f_{\gamma}\left(s+\gamma_3\right) 
- \gamma_1 s - \gamma_2.
\end{equation}

Let $n_0$ be the order of $D$ at $0$. 
Let $P_D$ be the constant coefficient
differential operator
\[
P_D = \prod_j \left(\partial_s - \lambda_j\right)^{n_j}
\]
with kernel $V_D$. Then clearly
\[
P_{D+2[0]} W(s+c) - P_{D+2[0]} W(s) = 0.
\]
Therefore $P_{D+2[0]}W$ is a holomorphic
function on $S$, so constant, i.e.
\[
P_{D+3[0]} W(s)=0.
\]
Therefore
\[
W(s) = f(s) + k_0 s^{n_0} + k_1 s^{n_0+1} + k_2 s^{n_0+2},
\]
for some constants $k_0, k_1, k_2 \in \C{}$
and some $f \in V_D$.
By conjugacy of the holonomy morphism
inside $G_D$, we can arrange that $f(s)=0$, so
\[
W(s) = k_0 s^{n_0} + k_1 s^{n_0+1} + k_2 s^{n_0+2}.
\]
We need
\[
W(s+c)-W(s)+\gamma_1 s+\gamma_2 \in V_D
\]
for every 
\[
\gamma
=
\begin{pmatrix}
1 & \gamma_1 & \gamma_2 \\
0 & 1 & \gamma_3 \\
0 & 0 & 1
\end{pmatrix} 
\in \fundgp{S},
\]
i.e.,
\[
\left(n_0+2\right)k_2 \gamma_3 s^{n_0+1}
+
\left(n_0+1\right)\gamma_3 
\left(k_1 + \frac{\left(n_0+2\right)}{2}k_2 \gamma_3\right) 
s^{n_0}
+\gamma_1 s+ \gamma_2 \in V_D.
\]

If $n_0 = 0$, then this expands to
\[
0 = 
\left(
2 k_2 \gamma_3 + \gamma_1
\right) s 
+ 
\left(
k_2 \gamma_3^2 + k_1 \gamma_3 + \gamma_2
\right)
\]
so that $2 k_2 \gamma_3 + \gamma_1=0$ for every 
\[
\gamma
=
\begin{pmatrix}
1 & \gamma_1 & \gamma_2 \\
0 & 1 & \gamma_3 \\
0 & 0 & 1
\end{pmatrix} 
\in \fundgp{S}.
\]
Examining generators, we find
\[
a_1 = -2 k_0 a_3 \text{ and } b_1 = -2 k_0 b_3
\]
which contradicts
\[
0 \ne rc_2 = a_1 b_3 - b_1 a_3.
\]
Therefore $n_0 \ge 1$.

If $n_0 = 1$, then the same equation expands to
require that
\[
3 k_2 \gamma_3 s^2 
+ 
\left(3 k_2 \gamma_3^2 + 2 k_1 \gamma_3 + \gamma_1\right) s
\]
must be constant.
Examining generators of $\fundgp{S}$, we find 
$k_2 = 0$ and
$a_1 = -2k_1 a_3$ and $b_1=-2k_1 b_3$ which
again contradicts the requirement
\[
0 \ne rc_2 = a_1 b_3 - b_1 a_3
\]
for the constants appearing in the generators
of the fundamental group of a primary Kodaira
surface. So $n_0 \ge 2$.

Plugging in that $n_0 \ge 2$, then equivariance
under the holonomy morphism expands to
\[
k_2 \left(n_0+2\right) \gamma_3 s^{n_0+1} 
+\left(n_0+1\right)\gamma_3
\left(k_1 + \frac{n_0+2}{2}k_2 \gamma_3\right) s^{n_0}
\in V_D
\]
modulo the terms in $V_D$, so we need $k_2=0$,
and then clearly $k_1=0$ as well. So finally
$W(s)=k_0s^{n_0}$.
\end{proof}

\section{%
\texorpdfstring%
{$G'_D$ and primary Kodaira surfaces}%
{G'D and primary Kodaira surfaces}%
}%
\label{section:GDprimeOnKodaira}

\begin{example}\label{example:GDPrimeOnKodaira}
As in example~\vref{example:Vitter}, write elements of $G_{2[0]}$ as 
\[
(t,f)=\left(g_3,g_1\left(z-g_3\right)+g_2\right).
\]
Pick an integer $n \ge 2$ and any two complex
numbers $k,\lambda \in \C{}$.
Consider the homomorphism of complex Lie groups
\[
\mapto[h]%
{\left(t,f\right) \in G_{2[0]}}%
{\left(t',\mu',f'\right) \in G'_{n[\lambda]}},
\]
where $t'=g_3, \mu'=e^{\lambda g_3}$ and
\[
f'\left(z\right)=
e^{\lambda z}
\left(g_1\left(z-g_3\right)+g_2
+
k\left(z^n-\left(z-g_3\right)^n\right)\right).
\]
This morphism takes $H_{2[0]}$ to $H'_{n[\lambda]}$ and thereby
induces the biholomorphism
\[
\mapto[\delta]%
{\left(z,w\right) \in \C{2}}%
{\left(
  z,
  e^{\lambda z}
  \left(
    w+k z^n
  \right)
\right) \in \C{2}},
\]
so that $\left(h,\delta\right)$ is an inducing morphism,
giving a 1-parameter family of $\left(G'_D,\C{2}\right)$-structures
(depending on $k$) for any effective divisor $D$ with multiplicity
at least $n$ at $\lambda$, and for any 
$\left(G_{2[0]},\C{2}\right)$-structure. In particular, all
at once we obtain families of these structures on all
primary Kodaira surfaces.
\end{example}

\begin{lemma}\label{lemma:GDPrimeOnKodaira}
Pick an effective divisor $D$ on $\C{}$.
Suppose that $S$ is a primary Kodaira
surface. Then every 
$\left(G'_D,\C{2}\right)$-structure on $S$ is
a unique one of those in example~\vref{example:GDPrimeOnKodaira},
up to holomorphic isomorphism.
\end{lemma}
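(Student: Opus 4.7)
The plan is to follow the structure of the proofs of Proposition~\ref{proposition:GPrimeDtorus} and Lemma~\ref{lemma:PrimaryKodaira1.5}, combining the multiplicative $\mu$-analysis of the torus case with the Heisenberg-type fundamental group of the primary Kodaira case. Let $\tilde S = \C{2}$ have coordinates $(s,t)$, let $\fundgp{S}$ act as in example~\ref{example:ExtraFreedom}, and write the developing map as $\delta = (z(s,t), w(s,t))$ with holonomy $h(\gamma) = (\tau_\gamma, g_\gamma, f_\gamma)$. Since the $z$-component of the $G'_D$-action is a pure translation, the argument in Lemma~\ref{lemma:PrimaryKodaira1.5} applies verbatim: $\partial z/\partial t$ is constant, the case $\partial z/\partial t = 1$ contradicts $a_1 b_3 - b_1 a_3 = r c_2 \ne 0$, and so I may normalize $z(s,t) = s$.

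Next, differentiating the equivariance of $w$ in $t$ shows that $\partial_t w$ transforms multiplicatively, so $d(\partial_t w)/\partial_t w$ descends to a holomorphic $1$-form on $S$. A direct computation on $\C{2}$, using $\gamma^* dt = dt + \gamma_1\,ds$ and the fact that some generator has $\gamma_1 \ne 0$ (forced by $a_1 b_3 - b_1 a_3 = r c_2 \ne 0$), shows that every invariant holomorphic $1$-form is a constant multiple of $ds$. Hence $\partial_t w = A e^{as}$ for some $A \ne 0$ and $a \in \C{}$, and rescaling $t$ reduces to
\[
w(s,t) = e^{as}\bigl(t + U(s)\bigr), \qquad g_\gamma = e^{a\gamma_3}.
\]
Substituting back yields
\[
f_\gamma(z) = e^{a z}\bigl(U(z) - U(z-\gamma_3) + \gamma_1(z-\gamma_3) + \gamma_2\bigr),
\]
and the condition $f_\gamma \in V_D$, via the identity $e^{-az} V_D = V_{D_a}$ with $D_a = \sum_j n_j[\lambda_j - a]$ as in the torus proof, becomes
\[
U(z) - U(z - \gamma_3) + \gamma_1(z - \gamma_3) + \gamma_2 \in V_{D_a}.
\]

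Now I apply the constant coefficient operator $P = P_{D_a + 2[0]}$, which annihilates both $V_{D_a}$ and every linear polynomial in $z$. The right hand side dies and the equation collapses to $(PU)(s + \gamma_3) = (PU)(s)$ for every $\gamma_3 \in \Z{}a_3 + \Z{}b_3$, a rank-two lattice in $\C{}$ by the $\R{}$-linear independence of $a_3$ and $b_3$. Hence $PU$ descends to a holomorphic function on the compact elliptic curve $\C{}/(\Z{}a_3 + \Z{}b_3)$ and is therefore constant. Differentiating once gives $P_{D_a + 3[0]} U = 0$, so
\[
U(s) = e^{-as} f(s) + k_0 s^{n_a} + k_1 s^{n_a+1} + k_2 s^{n_a+2}
\]
for some $f \in V_D$ and constants $k_0, k_1, k_2 \in \C{}$, where $n_a$ is the multiplicity of $a$ in $D$ (interpreted as $0$ if $a \notin \operatorname{supp} D$). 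Conjugating the developing system by $(0, 1, -f) \in G'_D$ then kills the $f$ term.

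Finally, plugging the purely polynomial $U(s) = k_0 s^{n_a} + k_1 s^{n_a+1} + k_2 s^{n_a+2}$ back into the membership in $V_{D_a}$, and using that $V_{D_a}$ contains polynomials only up to degree $n_a - 1$, matching the coefficient of $s^{n_a + 1}$ forces $k_2 = 0$ and then matching the coefficient of $s^{n_a}$ forces $k_1 = 0$. If $n_a \le 1$, the residual coefficient of $s$ produces a $\C{}$-linear relation between $\gamma_1$ and $\gamma_3$ valid on all generators, contradicting $a_1 b_3 - b_1 a_3 = r c_2 \ne 0$ just as at the corresponding step of Lemma~\ref{lemma:PrimaryKodaira1.5}. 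Hence $n_a \ge 2$ and $U(s) = k_0 s^{n_a}$, producing precisely example~\ref{example:GDPrimeOnKodaira} with $\lambda = a$, $n = n_a$, $k = k_0$. Uniqueness follows because $a$ is pinned down by the multiplier character $\gamma \mapsto g_\gamma$ of the holonomy, while $k_0$ is what remains after $f$ has been removed by conjugation. The main obstacle I expect is the bookkeeping around $P_{D_a + 2[0]}$ when $a \in \operatorname{supp} D$ so that $D_a$ already carries multiplicity at $0$: I need simultaneously that this operator kills the linear correction $\gamma_1(z-\gamma_3) + \gamma_2$ and that its kernel contains exactly the three extra powers $s^{n_a}, s^{n_a+1}, s^{n_a+2}$ on which the final degree-matching argument pivots.
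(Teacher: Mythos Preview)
Your proposal is correct and follows the same overall route as the paper: normalize $z=s$ as in Lemma~\ref{lemma:PrimaryKodaira1.5}, analyze $\partial_t w$ via its logarithmic derivative, reduce to $w=e^{as}(t+U(s))$, then hit the equivariance relation with $P_{D_a+2[0]}$ (the paper uses the equivalent $P_{D+2[k_2]}$) to force $U$ into $V_{D_a+3[0]}$, conjugate away the $V_D$ part, and finish with the degree-matching that rules out $n_a\le 1$ and kills $k_1,k_2$. The one genuine difference is your shortcut for the shape of $\partial_t w$: you observe directly that $d(\partial_t w)/\partial_t w$ descends to a holomorphic $1$-form on $S$ and invoke $h^{1,0}(S)=1$ (equivalently, the explicit check that $\gamma^*dt=dt+\gamma_1\,ds$ with some $\gamma_1\ne 0$) to get $\partial_t w=Ae^{as}$ in one stroke, whereas the paper first shows $(\partial_t h)/h$ is constant, writes $h=e^{k_0 t}H(s)$, extracts a second constant from $(H'/H)'$, and then uses the absence of a linear relation between $\gamma_1$ and $\gamma_3$ to force $k_0=k_1=0$. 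Your version is cleaner and buys exactly the same conclusion.
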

\begin{proof}
Suppose that the developing map is $\delta=(z,w)$.
As in the proof of lemma~\vref{lemma:PrimaryKodaira1.5},
we can arrange $z(s,t)=s$. 
Write the holonomy morphism as
\[
h \colon \gamma \in \fundgp{S} \to 
\left(\tau_{\gamma},g_{\gamma},f_{\gamma}\right) 
\in G'_D.
\]
Then for each
\[
\gamma=
\begin{pmatrix}
1 & \gamma_1 & \gamma_2 \\
0 & 1 & \gamma_3 \\
0 & 0 & 1
\end{pmatrix}
\in \fundgp{S},
\]
we have
\[
w
\left(
s+\gamma_3,
t+\gamma_1 s+\gamma_2
\right)
=
g_{\gamma} w(s,t) 
+ 
f_{\gamma}\left(s+\gamma_3\right).
\]
Let $h=\pd{w}{t}$. Since $(z,w)=(s,w)$
is a local biholomorphism, $h\ne 0$.
Moreover,
\[
\frac{\pd{h}{t}}{h}
\]
is invariant under $\fundgp{S}$, so is 
a holomorphic function on $S$, so constant,
say
\[
\pd{h}{t} = k_0 h,
\]
and so
\[
h(s,t)=e^{k_0 t}H(s),
\]
for some nowhere vanishing holomorphic function $H$.
Invariance of $h$ implies
\[
H\left(s+\gamma_3\right) 
= 
g_{\gamma}
e^{-k_0\left(\gamma_1 s+\gamma_2 \right)}H(s).
\]
So 
\[
H'\left(s+\gamma_3\right) 
= 
g_{\gamma}
e^{-k_0\left(\gamma_1 s+\gamma_2\right)}
\left(-k_0 \gamma_1 H(s) + H'(s)\right).
\]
Dividing by $H\left(s+\gamma_3\right)$,
\[
\frac{H'\left(s+\gamma_3\right)}{H\left(s+\gamma_3\right)}
=
-k_0\gamma_1+
\frac{H'(s)}{H(s)}.
\]
It then follows that
\[
\left(\frac{H'}{H}\right)'
\]
is invariant under $\fundgp{S}$, so a constant,
say
\[
\left(\frac{H'}{H}\right)'=k_1,
\]
some $k_1 \in \C{}$. But then
\[
\frac{H'(s)}{H(s)}=k_1s+k_2,
\]
some $k_2 \in \C{}$. The equation
\[
\frac{
H'\left(s+\gamma_3\right)
}
{
H\left(s+\gamma_3\right)
}
=
-k_0\gamma_1+
\frac{H'(s)}{H(s)}
\]
tells us that
\[
k_1 \gamma_3 = -k_0 \gamma_1,
\]
for all 
\[
\gamma=
\begin{pmatrix}
1 & \gamma_1 & \gamma_2 \\
0 & 1 & \gamma_3 \\
0 & 0 & 1
\end{pmatrix}
\in \fundgp{S}.
\]
There is no linear relation between
the $\gamma_1$ and $\gamma_2$ components of $\gamma$
among the generators of $\fundgp{S}$,
as we see above, so $k_0 = k_1 = 0$.
Therefore
\[
\frac{H'(s)}{H(s)}=k_2,
\]
so that
\[
H(s) = e^{k_2s + k_3}
\]
for some constant $k_3 \in \C{}$.
Going backward,
\[
\pd{w}{t} = e^{k_2 s+ k_3},
\]
so 
\[
w(s,t) = t e^{k_2 s + k_3} + W(s),
\]
for some holomorphic function $W(s)$
and $g_{\gamma}=e^{k_2 \gamma_3}$.

The equation for $w$ forces
\[
e^{k_2\left(s+\gamma_3\right)+k_3}\left(\gamma_1 s+\gamma_2\right) 
+ 
W\left(s+\gamma_3\right)
=
e^{k_2 \gamma_3}W(s) 
+ 
f_{\gamma}\left(s+\gamma_3\right).
\]
Therefore if we let
\[
F = P_{D+2\left[k_2\right]}W,
\]
then
\[
F\left(s+\gamma_3\right)
= 
e^{k_2 \gamma_3}F(s).
\]
So $F$ is a section of a degree $0$ line bundle
on the elliptic curve $E_0$ (the base
of the elliptic fibration $E_1 \to S \to E_0$
of the primary Kodaira surface $S$), 
and therefore
either $F$ is everywhere $0$ or everywhere nonzero.

Suppose that $F$ is everywhere nonzero. 
Clearly $dF/F$ is a holomorphic 1-form
on the elliptic curve $E_0$, so 
$dF/F=k_4 ds$, for some constant $k_4 \in \C{}$,
i.e.
\[
F(s) = k_5 e^{k_4 s}.
\]
By the $\R{}$-linear independence
of the periods of the elliptic
curve, $k_4=k_2$, so 
\[
F(s) = k_5 e^{k_2 s}.
\]
Therefore
\[
P_{D+3\left[k_2\right]} W =0.
\]
Suppose that $n_0$ is the order of $k_2$ in $D$.
Then
\[
W(s) = f(s) 
+ 
e^{k_2 s}
\left( 
K_0 s^{n_0} + K_1 s^{n_0+1} + K_2 s^{n_0+2}
\right)
\]
for some $f \in V_D$ and constants $K_0, K_1, K_2 \in \C{}$. 
We can conjugate the holonomy
in $G_D$ to arrange that $f=0$, so
\[
W(s) = e^{k_2 s}\left( K_0 s^{n_0} + K_1 s^{n_0+1} + K_2 s^{n_0+2}
\right).
\]
Similarly if $F$ is everywhere $0$ then $W(s)$ has this form, but with
$K_2=0$.

Equivariance of the developing map under the
holonomy morphism then implies
\[
e^{k_3}\left(\gamma_1 s+\gamma_2\right)
+
\left(n_0+1\right)
\gamma_3
\left(
K_1
+
\frac{\left(n_0+2\right)K_2\gamma_3}{2}
\right)
s^{n_0}
+
K_2\left(n_0+2\right)\gamma_3
s^{n_0+1}
\in V_{D_{k_2}}
\]
for all 
\[
\gamma=
\begin{pmatrix}
1 & \gamma_1 & \gamma_2 \\
0 & 1 & \gamma_3 \\
0 & 0 & 1
\end{pmatrix}
\in \fundgp{S}.
\]

If $n_0=0$, then this says
\[
0=
\left(e^{k_3}\gamma_1 + 2 K_2 \gamma_3\right)s
+
e^{k_3b} + K_1 \gamma_3 + K_2 \gamma_3^2.
\]
So 
\[
e^{k_3} \gamma_1 + 2 K_2 \gamma_3 = 0
\]
for all 
\(
\gamma
\in \fundgp{S}.
\)
There can be no complex linear relation between
the $\gamma_1$ and $\gamma_3$ components of $\gamma \in \fundgp{S}$,
a contradiction, so $n_0 \ge 1$.

If $n_0=1$, then this says that
\[
3 K_2 \gamma_3 s^2 +
\left(e^{k_3} \gamma_1 + 2 K_1 \gamma_3 
+ 3 K_2 \gamma_3^2\right) s
\]
is constant,
for all 
\(
\gamma
\in \fundgp{S}.
\)
Therefore $K_2=0$ and again a linear relation
\[
e^{k_3} \gamma_1 + 2 K_1 \gamma_3 =0
\]
which is impossible. So $n_0 \ge 2$.
We can rescale the $t$ variable,
possibly changing the representation
of the fundamental group of our surface,
but still keeping it a primary Kodaira
surface, to arrange $k_3=0$.

Now assuming $n_0 \ge 2$, we easily see
that $K_1=K_2=0$.
\end{proof}

\section{Nonsingular holomorphic foliations on compact complex surfaces}

\begin{lemma}\label{lemma:EllipticFibration}
Suppose that a compact complex surface $S$ is
the total space of an elliptic fibration,
and that $S$ contains no rational curves.
Then $S$ has no singular fibers and $S$ is an 
elliptic fiber bundle. Up to replacing $S$ by
a finite covering space,
$S$ is a principal bundle 
\[
\BundleCommutativeDiagram{E}{S}{C}{}{}
\]
where $E$ is some elliptic curve
and $C$ is a compact complex curve.
Moreover the following are then equivalent:
\begin{enumerate}
\item
the first Chern class vanishes: $c_1(C,S)=0$,
\item
the first Betti number $b_1(S)$ is even,
\item
$S$ is K{\"a}hler,
\item
$S \to C$ admits a holomorphic flat connection,
\item
$S \to C$ is topologically trivial (though
perhaps not holomorphically trivial).
\end{enumerate}
\end{lemma}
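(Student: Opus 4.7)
The proof proceeds in two main stages: first, reducing to a principal elliptic bundle; second, verifying the chain of equivalences.

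For the first stage, I would apply Kodaira's classification of singular fibers of elliptic fibrations. Every fiber type other than a smooth elliptic curve (possibly with multiplicity) contains at least one rational component. Since $S$ by hypothesis contains no rational curves, the only singular fibers can be of type $mI_0$, i.e., smooth elliptic curves with some multiplicity $m$. A base change ramified at each such point of $C$ (extracting $m$-th roots) turns the multiple fibers into ordinary smooth fibers, so after replacing $S$ by a suitable finite covering space $S\to C$ becomes a locally trivial elliptic fiber bundle. The structure group of such a bundle sits in an extension $1\to E\to\operatorname{Aut}(E)\to K\to 1$, where $K=\operatorname{Aut}(E,0)$ is finite (of order $2$, $4$, or $6$), and a further finite cover reduces the structure group to $E$, yielding a principal $E$-bundle $S\to C$.

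For the chain of equivalences I would work with the exponential-type short exact sequence of sheaves on $C$ coming from $E=\C{}/\Lambda$,
\[
\ShortExactSequence{\Lambda}{\mathcal{O}_C}{\underline{E}}{}{}
\]
where $\underline{E}$ is the sheaf of germs of holomorphic maps into $E$ and $\Lambda\cong\Z{2}$ is the constant sheaf. Principal $E$-bundles over $C$ are classified by $\Cohom{1}{C,\underline{E}}$, and the connecting homomorphism $\Cohom{1}{C,\underline{E}}\to\Cohom{2}{C,\Lambda}\cong\Lambda$ sends a bundle to its first Chern class $c_1(C,S)$. Since the topological classification of principal $E$-bundles over $C$ is by $\Cohom{2}{C,\fundgp{E}}\cong\Lambda$, this already gives (1)$\iff$(5). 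The equivalence (3)$\iff$(2) is the classical Kodaira--Siu characterization of the K\"ahler property on compact complex surfaces by parity of $b_1$. For (1)$\iff$(4), a holomorphic flat connection on the principal $E$-bundle corresponds to a lift of the classifying cocycle from $\underline{E}$ to the constant sheaf $E$, which is possible precisely when the obstruction $c_1(C,S)$ vanishes.

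The remaining equivalence (1)$\iff$(2) would come from the Leray spectral sequence of $\pi\colon S\to C$. Since the structure group is abelian, the higher direct images $R^q\pi_*\C{}$ are trivial local systems, and the only nontrivial $d_2$-differential is cup product with $c_1(C,S)$. A direct rank count on the $E_\infty$-page then gives $b_1(S)=b_1(C)+2$ when $c_1=0$ and $b_1(S)=b_1(C)+1$ when $c_1\ne 0$, so $b_1(S)$ is even exactly when $c_1=0$, as required. The most delicate step, which I would address last, is the precise identification of the Atiyah class of the principal bundle with the topological first Chern class $c_1(C,S)$; once that identification is in place the chain of equivalences closes.
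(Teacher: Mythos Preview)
Your argument is correct, but it is considerably more elaborate than the paper's, which is essentially a string of citations. The paper disposes of singular fibers by the one-line genus argument (singular fibers would have rational components), observes that the $j$-invariant is a holomorphic function on the compact base and hence constant, invokes the Grauert--Fischer theorem for local triviality, reduces the structure group from $\operatorname{Aut}(E)$ to $E$ by a finite cover, and then refers the entire block of five equivalences to Barth--Hulek--Peters--Van de Ven.

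Two points of genuine difference are worth flagging. First, you go through Kodaira's list and explicitly allow for multiple fibers $mI_0$, removing them by a ramified base change; the paper's genus remark does not touch $mI_0$ at all, and its appeal to Grauert--Fischer tacitly assumes they are absent. Your treatment is more honest here, though note that the lemma as stated asserts $S$ \emph{itself} is already a fiber bundle, so either say why $mI_0$ cannot occur under the hypothesis, or fold it into the ``up to a finite covering'' clause (which is all that the later applications need). Second, you actually prove the equivalences: the exponential sequence for $E$ gives (1)$\iff$(5), Atiyah's obstruction gives (1)$\iff$(4) (using that on a curve any holomorphic connection is automatically flat), Kodaira--Siu gives (2)$\iff$(3), and the Leray spectral sequence with $d_2$ equal to cup product with $c_1$ gives (1)$\iff$(2). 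This buys self-containment; the paper buys brevity by outsourcing everything to the literature.
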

\begin{proof}
The singular fibers must have lower
genus than the generic fiber, so must
be rational. 
The $j$-invariant of the elliptic
curve fibers is a holomorphic function
on the base, so must be constant.
The fibration must be a holomorphic
fiber bundle by the Grauert--Fischer 
local triviality theorem;
see Barth et. al. \cite{Barth/Hulek/Peters/VanDeVen:2004} p. 29.
The fiber bundle has transition
functions valued in the biholomorphism
group of the elliptic curve fibers.
This group is a finite extension
of the elliptic curve, so taking a finite
covering space we can reduce the structure
group to the elliptic curve. 
The equivalence of the various conditions
is well known; see \cite{Barth/Hulek/Peters/VanDeVen:2004} p. 145--149.
\end{proof}

\begin{lemma}[Klingler \cite{Klingler:1998}]%
\label{lemma:EllipticSurfacePresentation}
Suppose that
\[
\BundleCommutativeDiagram{E}{S}{C}{}{}
\]
is a principal bundle, $E$ is an elliptic curve
and $C$ is a compact complex curve of genus $g \ge 1$.
Suppose that $b_1(S)$ is odd. Then the universal
covering space $\tilde{S}$ of $S$
is biholomorphic to 
$\C{2}$ (if $g \ge 1$) or
$\UpperHalfPlane \times \C{}$ (if $g \ge 2$).
The fundamental group of $S$ admits a presentation
\[
\fundgp{S} = 
\Presentation{a_1, b_1, a_2, b_2, \dots, a_g, b_g, c, d}%
{c, d \text{ central}, \prod_{i=1}^g \left[a_i,b_i\right]=c^r}
\]
where $r$ is a positive integer, equal
to the Chern number of the bundle $S \to C$.
\end{lemma}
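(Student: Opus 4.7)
The plan is to combine the homotopy long exact sequence of the principal fibration, the centrality of $\fundgp{E}$ in $\fundgp{S}$ forced by the principal action, a classical identification of a commutator lift with the Chern class, and holomorphic triviality of $\C{}$-bundles on the Stein universal cover of $C$.

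First I would feed the principal bundle $E\to S\to C$ into the long exact sequence of homotopy groups. Because $g\ge 1$ gives $\pi_2(C)=0$, the sequence reduces to a short exact sequence
\[
0 \to \Z{2} \to \fundgp{S} \to \fundgp{C} \to 1,
\]
where the kernel is $\fundgp{E}$. Choose generators $c, d$ of that kernel. To see that $c, d$ are central, I would lift everything to the universal cover $\tilde S$: the principal $E$-action on $S$ lifts to a principal $\C{}$-action on $\tilde S$, the deck transformations of $\tilde S\to S$ preserve the fibration, and the elements of $\fundgp{E}\subset\fundgp{S}$ act on $\tilde S$ exactly through the kernel of $\C{}\to E$ sitting inside this principal $\C{}$-action. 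The principal $\C{}$-action commutes with every fibration-preserving transformation, and in particular with every deck transformation, giving centrality of $c$ and $d$ in $\fundgp{S}$.

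Next I would produce the relation. Lift a standard surface-group presentation $\fundgp{C}=\Presentation{a_1,b_1,\dots,a_g,b_g}{\prod_{i=1}^g [a_i,b_i]=1}$ to elements $\tilde a_i, \tilde b_i\in\fundgp{S}$. Then $\prod_i[\tilde a_i,\tilde b_i]$ lies in the kernel $\fundgp{E}$; by centrality of $\fundgp{E}$, changing the lifts does not change this product, so it is a well-defined element of $\Z{2}$. Write it as $c^{r_0} d^{s_0}$ in the original basis. A unimodular change of basis in $\Z{2}=\fundgp{E}$, which only renames the generators of $\fundgp{E}$, brings it into the form $c^r$ with $r=\gcd(r_0, s_0)\ge 0$. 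A computation in the Serre spectral sequence (or equivalently, evaluating the class of the central extension against the fundamental class of $C$) identifies $r$ with the absolute value of the Chern number of the principal bundle $S\to C$. By the previous lemma, $b_1(S)$ odd is equivalent to $c_1(C,S)\ne 0$, and hence forces $r\ge 1$.

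Finally, for the universal cover I would pull back $S\to C$ along the universal covering $\tilde C\to C$ (so $\tilde C=\C{}$ if $g=1$ and $\tilde C=\UpperHalfPlane$ if $g\ge 2$). The pullback is a principal $E$-bundle over the simply connected base $\tilde C$; lifting the structure group along $\C{}\to E$ gives a holomorphic principal $\C{}$-bundle on $\tilde C$ whose total space is $\tilde S$. Since $\tilde C$ is Stein of dimension one, $H^1(\tilde C,\OO{0})=0$, so this $\C{}$-bundle is holomorphically trivial. Hence $\tilde S\cong \tilde C\times \C{}$, which is $\C{2}$ for $g=1$ and $\UpperHalfPlane\times\C{}$ for $g\ge 2$. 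The main obstacle is the identification of $r$ with the Chern number: once that spectral-sequence bookkeeping is in hand, the earlier lemma supplies $r\ne 0$ and the rest of the argument is routine.
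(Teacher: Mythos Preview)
The paper does not prove this lemma; it is quoted from Klingler without argument, so there is no proof in the paper to compare against. Your outline is the standard approach and is essentially correct: the homotopy exact sequence gives the central extension (your centrality argument is right in substance, though ``commutes with every fibration-preserving transformation'' overstates it---what you actually need, and what the connectedness argument gives, is commutation with the deck transformations), the commutator lift is the extension class, and triviality of the holomorphic $\C{}$-bundle over the Stein base $\tilde C$ identifies the universal cover. The step you correctly flag as the crux---matching $r$ to the Chern number via the transgression in the Serre spectral sequence---is exactly what Klingler (and, in genus one, Fillmore--Scheuneman and D\"urr, also cited in the paper) provide.
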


\begin{lemma}\label{lemma:ChernClassZero}
Suppose that $S$ is an elliptic fiber bundle,
with a nonsingular holomorphic foliation transverse
to its fibers. Then, after perhaps replacing
$S$ by a finite covering space, $S$ becomes
a principal elliptic fiber bundle and there is a unique holomorphic
flat connection for $S \to C$, say with connection form $\eta$,
so that the foliation is $\eta=0$.
Conversely, if $\eta$ is one such holomorphic
connection form, then every transverse
holomorphic foliation on $S$ has a unique 
expression as 
$\eta=p^* \xi$ where $\xi$ is an arbitrary
holomorphic $1$-form on $C$.
\end{lemma}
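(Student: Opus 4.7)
The plan is to first apply Lemma~\ref{lemma:EllipticFibration} to replace $S$ by a finite covering on which it becomes a principal elliptic fiber bundle $p \colon S \to C$ with structure group an elliptic curve $E$. Because $E$ is abelian, the fundamental vector field $\mathbf{v}$ of the principal action is a globally defined, nowhere-vanishing holomorphic vector field trivializing the vertical line bundle $\ker dp$. A nonsingular transverse holomorphic foliation $\mathcal{F}$ then corresponds bijectively to a unique holomorphic $1$-form $\eta$ on $S$ satisfying $\eta(\mathbf{v}) = 1$ and $\ker \eta = \mathcal{F}$; the aim is to show that $\eta$ is a flat principal connection form.

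The main step, and the only one that is not formal, is to establish that $\eta$ is invariant under the $E$-action. Let $\phi_g \colon S \to S$ denote translation by $g \in E$. Because $\mathbf{v}$ is $E$-invariant, $(\phi_g^* \eta)(\mathbf{v}) = 1$ for every $g$, so the holomorphic assignment $g \mapsto \phi_g^* \eta$ lands in the finite-dimensional complex vector space $H^0(S, \Omega^1_S)$ of global holomorphic $1$-forms. Since $E$ is a compact connected complex manifold, any holomorphic map from $E$ into a finite-dimensional complex vector space is constant. Hence $\phi_g^* \eta = \eta$ for all $g \in E$, i.e., $\eta$ is a principal connection form on $S \to C$.

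Flatness is then essentially automatic. Because $E$ is abelian, the curvature equals $d\eta$. From $\eta(\mathbf{v}) = 1$ and $\mathcal{L}_{\mathbf{v}} \eta = 0$, Cartan's formula gives $\iota_{\mathbf{v}} d\eta = 0$, so $d\eta$ is horizontal and $E$-invariant, hence equals $p^*\Omega$ for a holomorphic $2$-form $\Omega$ on the curve $C$; but $C$ has complex dimension one, so $\Omega = 0$ and $d\eta = 0$. Uniqueness of $\eta$ with the specified kernel and normalization $\eta(\mathbf{v}) = 1$ is immediate. By the equivalences in Lemma~\ref{lemma:EllipticFibration}, existence of this holomorphic flat connection is equivalent to $c_1(C, S) = 0$.

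For the converse, fix a flat connection form $\eta$ and let $\eta'$ be the $1$-form coming from any other transverse holomorphic foliation. The argument above applies equally to $\eta'$, so $\eta'$ is again $E$-invariant and flat; hence $\eta' - \eta$ annihilates $\mathbf{v}$ and is $E$-invariant, so it equals $p^* \xi$ for a unique holomorphic $1$-form $\xi$ on $C$. Conversely, given any $\xi \in H^0(C, \Omega^1_C)$, the $1$-form $\eta + p^* \xi$ is $E$-invariant, closed (since $d\xi = 0$ on a curve), and satisfies $(\eta + p^* \xi)(\mathbf{v}) = 1$, so its kernel is a transverse holomorphic foliation on $S$, completing the parametrization by $H^0(C, \Omega^1_C)$.
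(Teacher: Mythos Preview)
Your proof is correct and in fact somewhat cleaner than the paper's, but the key step is organized differently. The paper first \emph{averages} the $1$-form defining the foliation over the $E$-action to produce some holomorphic connection $\bar\eta$ (using that $\bar\partial$ commutes with integration over $E$), and only afterwards expresses the original foliation as $\bar\eta = p^*\xi$; replacing $\bar\eta$ by $\bar\eta-\xi$ then gives the connection whose kernel is the foliation. You instead skip the averaging entirely: by observing that $g \mapsto \phi_g^*\eta$ is a holomorphic map from the compact complex manifold $E$ into the finite-dimensional space $H^0(S,\Omega^1_S)$, you conclude at once that the original $\eta$ is already $E$-invariant, hence already a connection form. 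Your route is more direct and also makes flatness explicit via Cartan's formula and $\dim_{\mathbb C} C = 1$, a point the paper leaves implicit. One small quibble: invoking Lemma~\ref{lemma:EllipticFibration} for the passage to a principal bundle is slightly off, since that lemma carries a ``no rational curves'' hypothesis you do not have; but the relevant fact (an elliptic fibre \emph{bundle} becomes principal after a finite cover, because $\operatorname{Aut}(E)$ is a finite extension of $E$) needs no such hypothesis, so the argument stands.
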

\begin{proof}
Replace $S$ with a finite covering space to
arrange that $S$ is principal, $E \to S \to C$.
We can average over the action of the
elliptic curve $E$, to find a transverse
foliation which is $E$-invariant, i.e.
a holomorphic connection.
Therefore the fibration has a holomorphic
connection as a principal bundle over the curve $C$.
So the first Chern class of the bundle must
vanish (see Atiyah \cite{Atiyah:1957}). 
Let $\eta$ be a holomorphic connection 1-form.
Define a holomorphic section $\xi \in \Cohom{0}{S,p^* \Omega^1_C}$
by $\eta(v)=\xi\left(p_* v\right)$ for any
tangent vector $v$ tangent to a leaf of the foliation.
Since $\eta$ is a connection, $E$-invariance
of $\eta$ forces $\xi$ to be the pullback
of a 1-form on $C$, which we also call $\xi$.
Replace $\eta$ by $\eta-\xi$, which is also
a holomorphic connection.
\end{proof}

\begin{definition}
A nonsingular holomorphic foliation $F$ of a complex surface $S$
is \emph{turbulent} if there is an elliptic fibration
$S \to B$ so that finitely many fibers are tangent to $F$,
while all others fibers are everywhere transverse to $F$.
\end{definition}

\begin{lemma}\label{lemma:Brunella}[Brunella \cite{Brunella:1997}]
Every holomorphic nowhere singular foliation on any 
compact complex surface is one of the following:
\begin{enumerate}
\item a holomorphic fibration by compact complex curves,
\item everywhere transverse to an elliptic or rational fibration,
\item a turbulent foliation on an elliptically fibered surface,
\item a linear foliation on a complex torus,
\item a holomorphic foliation of a Hopf surface, linear or nonlinear (see Mall \cite{Mall:1998}),
\item a holomorphic foliation of an Inoue surface (see 
Inoue \cite{Inoue:1974},
Kohler \cite{Kohler:1995,Kohler:1996}),
\item a transversely hyperbolic foliation with
dense leaves on a surface whose universal covering
space is a holomorphic disk fibration over a disk. 
\end{enumerate}
\end{lemma}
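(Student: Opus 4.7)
The plan is to follow Brunella's original strategy, since this is his classification. The argument combines the Enriques--Kodaira classification of compact complex surfaces with a case split on (a) whether $F$ has any compact leaves and (b) whether $F$ is transverse to some distinguished fibration of $S$. The statement is invariant under finite covers, so from the start I would freely replace $S$ by a finite cover whenever convenient.

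First I would dispose of the case in which $F$ has a compact leaf $L$. Since $F$ is nowhere singular on a surface, the normal bundle $N_{L/S}$ is holomorphically trivial, and a holomorphic version of Reeb stability produces a neighborhood in which every leaf is compact of the same genus. If this persists globally, $F$ is a fibration by compact curves, giving case (1). If only finitely many leaves are compact, they must be elliptic (rational compact leaves are ruled out by adjunction and triviality of the normal bundle), and $F$ is turbulent on the elliptic fibration they sit in, giving case (3). If $F$ has no compact leaf but is everywhere transverse to some elliptic or rational fibration of $S$, Lemma~\vref{lemma:EllipticFibration} together with Lemma~\vref{lemma:ChernClassZero} places us in case (2).

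In the remaining situation --- no compact leaves, no transverse fibration --- I would walk through the Enriques--Kodaira classification of $S$. Complex tori give case (4): the foliation lifts to $\C{2}$, and applying the constancy of holomorphic functions on the torus to a defining $1$-form for $F$ forces $F$ to be linear. Surfaces of Kodaira dimension $\ge 0$ other than tori are ruled out, or reduced to earlier cases, via Bogomolov-type inequalities applied to the canonical bundle $K_F$ of the foliation. Among surfaces of class VII the Hopf case is handled by Mall \cite{Mall:1998}, giving case (5), and the Inoue case by Kohler \cite{Kohler:1995,Kohler:1996}, giving case (6).

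The main obstacle, and the genuine substance of Brunella's theorem, is the final case (7): proving that any nowhere singular holomorphic foliation not already accounted for must live on a surface whose universal cover is a holomorphic disk fibration over a disk, and that the foliation is transversely hyperbolic with dense leaves. This step requires positivity results for $K_F$ together with the construction of a transverse invariant metric of negative curvature from these positivity estimates; it is the deep ingredient and cannot be reproduced in a short sketch, so I would simply cite Brunella \cite{Brunella:1997}.
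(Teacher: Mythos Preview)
The paper does not prove this lemma at all: it is stated with the attribution ``[Brunella \cite{Brunella:1997}]'' in the header and is simply quoted as a known result, with no proof environment following it. So there is nothing to compare your sketch against---you have supplied strictly more than the paper does. Your outline is a reasonable summary of Brunella's strategy, and your honest acknowledgment that case~(7) is the deep part requiring a citation is exactly the right call; for the purposes of this paper, the entire lemma is handled by citation.
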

These categories are not quite exclusive. For example,
on a product of compact Riemann surfaces,
the ``horizontal'' fibration 
is transverse to the obvious ``vertical'' fibration.

\begin{proposition}%
\label{proposition:EllipticFiberBundleFoliations}
Suppose that $S$ is
the total space of an elliptic fiber bundle
over a curve of genus $g \ge 2$.
Suppose that $b_1(S)$ is odd. Any nonsingular holomorphic foliation 
on $S$ either is turbulent or coincides
with the fibration.
\end{proposition}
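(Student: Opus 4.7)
The plan is to apply Brunella's classification (lemma~\ref{lemma:Brunella}) to our foliation and eliminate every entry on the list except the elliptic fibration itself and the turbulent foliations. The surface $S$ is an elliptic fiber bundle of odd first Betti number over a curve of genus $g\ge 2$; by lemma~\ref{lemma:EllipticFibration}, the first Chern class $c_1(C,S)$ is nonzero, so after a finite cover we may take $S$ to be principal with nonzero Chern number $r$, and by lemma~\ref{lemma:EllipticSurfacePresentation} the universal cover is $\UpperHalfPlane\times\C{}$ and $\fundgp{S}$ admits the presentation with central elements $c,d$ satisfying $\prod_i[a_i,b_i]=c^r$.

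The easy cases are disposed of as follows. The fundamental group $\fundgp{S}$ surjects onto a genus $g\ge 2$ surface group, so $S$ is not a torus, not a Hopf surface (whose fundamental group is virtually cyclic), and not an Inoue surface (which has $b_2=0$, whereas $b_2(S)>0$ by the Leray spectral sequence for $E\to S\to C$). Since $\tilde S=\UpperHalfPlane\times\C{}$ is not a disk bundle over a disk, case~(7) of lemma~\ref{lemma:Brunella} is ruled out. For case~(2), a foliation transverse to the elliptic fibration would force $c_1(C,S)=0$ by lemma~\ref{lemma:ChernClassZero}, contradicting oddness of $b_1(S)$; a transverse rational fibration would require rational curves on $S$, but any holomorphic map $\Proj{1}\to C$ is constant (since $g(C)\ge 2$), so rational curves would lie in elliptic fibers, which is impossible. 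Cases~(1) and~(3) are where the real work is.

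The remaining obstacle, and the main point, is case~(1): a second holomorphic fibration $p'\colon S\to B'$ by compact curves of genus $g_F$, distinct from $p\colon S\to C$. Since the foliation is nonsingular, $p'$ is a submersion and the generic leaf is a smooth compact curve; the argument splits on $g_F$. The case $g_F=0$ is excluded because $S$ has no rational curves. If $g_F\ge 2$, then $\chi(S)=0$ and the multiplicativity of Euler characteristic for smooth fiber bundles forces $\chi(B')=0$, hence $B'$ is an elliptic curve; the induced surjection $\fundgp{S}\to\fundgp{B'}=\Z{2}$ is abelian, so the image of $c$ is a torsion element of $\Z{2}$, hence zero, so $c$ lies in the kernel; but the kernel is a genus $g_F$ surface group, which has trivial center, while $c$ is central in $\fundgp{S}$ and thus central in the kernel, forcing $c=1$ in $\fundgp{S}$, contradicting that $c$ has infinite order. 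If $g_F=1$, a generic fiber of $p'$ is an elliptic curve which, composed with $p$, would give a holomorphic map $E\to C$ to a curve of genus $g\ge 2$; Riemann--Hurwitz forces such a map to be constant, so each fiber of $p'$ lies inside a fiber of $p$, forcing the two fibrations to coincide.

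With every case except the elliptic fibration itself and the turbulent foliations eliminated, the proposition follows. The main obstacle is case~(1) with $g_F\ge 2$, where neither Euler characteristics nor Chern class considerations alone suffice; the key is to exploit the rigid presentation of $\fundgp{S}$ provided by lemma~\ref{lemma:EllipticSurfacePresentation} together with the fact that surface groups of genus $\ge 2$ have trivial center.
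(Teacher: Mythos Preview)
Your proof is correct and follows the same overall strategy as the paper: invoke Brunella's classification (lemma~\ref{lemma:Brunella}) and kill the ``everywhere transverse'' case via the Chern class obstruction of lemma~\ref{lemma:ChernClassZero} combined with lemma~\ref{lemma:EllipticSurfacePresentation}. That last step is exactly what the paper does.

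The difference is one of thoroughness. The paper's proof is a three-sentence sketch: it simply asserts that Brunella's list collapses to the three alternatives ``turbulent / coincides with the fibration / everywhere transverse'' and then disposes of the transverse case. It does not explain why items (4)--(7) of lemma~\ref{lemma:Brunella} are excluded, nor why item (1) (a fibration by compact curves) must coincide with the given elliptic fibration rather than being a genuinely different fibration. You fill all of this in. Your eliminations of the torus, Hopf, Inoue, and disk-fibration cases are routine, but your treatment of Brunella's case~(1) --- especially the $g_F\ge 2$ subcase, where you use $\chi(S)=0$ to force the second base to be elliptic and then use centrality of $c$ against the triviality of the center of a higher-genus surface group --- is a genuine argument that the paper omits. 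The $g_F=1$ subcase (Riemann--Hurwitz forces fibers of the second fibration into fibers of the first) also quietly handles the possibility, in Brunella's case~(2), of transversality to some \emph{other} elliptic fibration. So your route is the same as the paper's, but you supply the details the paper leaves to the reader.
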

\begin{proof}
Brunella's classification 
shows us that every nonsingular holomorphic foliation on $S$
either (1) is turbulent or (2) coincides with the fibration
or (3) is everywhere transverse to the fibration.
Suppose that the foliation is everywhere transverse.
By lemma~\vref{lemma:ChernClassZero},
we can suppose, after replacing $S$ with a finite covering
space of $S$, that $S$ has a holomorphic flat connection.
But the first Chern class does not
vanish by lemma~\vref{lemma:EllipticSurfacePresentation}.
\end{proof}

\begin{definition}
A \emph{holomorphic foliation with transverse translation structure}
on a complex surface
is a closed nowhere zero holomorphic 1-form.
The associated foliation is the one on whose
tangent lines the 1-form vanishes.
\end{definition}

\begin{lemma}\label{lemma:TransverseTranslation}
Suppose that $S$ is a compact complex
surface, containing no rational curves.
Suppose that $S$ admits a holomorphic nonsingular
foliation with a transverse translation
structure. Then up to replacing $S$ with
a finite covering space, $S$ is
\begin{enumerate}
\item
a complex torus
with a linear foliation, or 
\item
a principal elliptic bundle with a holomorphic
flat connection $\eta$ with foliation $\eta=0$, or
\item
a primary Kodaira surface, i.e. a holomorphic
principal bundle
\[
\BundleCommutativeDiagram{E_1}{S}{E_0}{}{}
\]
with elliptic curve structure group over an 
elliptic curve base, with foliation equal
to the fibration.
\end{enumerate}
\end{lemma}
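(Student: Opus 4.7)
The plan is to invoke Brunella's classification of nonsingular holomorphic foliations on compact complex surfaces (lemma~\ref{lemma:Brunella}) and eliminate every case except those producing conclusions (1)--(3). Rational fibrations produce rational curves, excluded by hypothesis, so Brunella's case (2) collapses to its elliptic subcase. Hopf and Inoue surfaces (cases (5) and (6)) are class VII, and the Hodge decomposition for compact complex surfaces with $b_1=1$ gives $h^{1,0}=0$, so they carry no holomorphic 1-forms at all. Case (7) is transversely hyperbolic, which is incompatible with the transverse translation structure provided by $\omega$. For the turbulent case (3), a local computation in coordinates $(z,w)$ adapted to the elliptic fibration near a tangent fiber, writing $\omega=f(z,w)\,dz+g(z,w)\,dw$ and using that any holomorphic function on an elliptic fiber is constant, forces $f=f(z)$ and $g=g(z)$; then $d\omega=0$ makes $g$ a constant, and the existence of a tangent fiber forces it to vanish, so $\omega=\pi^{*}\alpha$ and the foliation coincides with the fibration, contradicting turbulence. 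Only cases (1), (2), and (4) of Brunella remain.

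Case (4) is already conclusion (1) of our lemma. For case (2), lemma~\ref{lemma:EllipticFibration} gives, after a finite cover, a principal elliptic bundle $E_1\to S\to C$, and lemma~\ref{lemma:ChernClassZero} converts the transverse foliation into a holomorphic flat connection $\eta$ with foliation $\eta=0$, which is conclusion (2). In case (1) the foliation is a fibration $\pi\colon S\to B$ by compact curves; the same local computation as for the turbulent case gives $\omega=\pi^{*}\alpha$ with $\alpha$ nowhere zero on $B$, so $B$ has trivial canonical bundle and hence $B=E_0$ is an elliptic curve, and the fibers are compact curves of genus $g'\ge1$ since $S$ has no rational curves. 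If $g'=1$, lemmas~\ref{lemma:EllipticFibration} and~\ref{lemma:EllipticSurfacePresentation} make $\pi$, after a finite cover, a principal $E_1$-bundle over $E_0$, and the parity of $b_1(S)$ splits this into a complex torus (conclusion (1)) or a primary Kodaira surface (conclusion (3)).

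If instead $g'\ge2$, then $\operatorname{Aut}(F)$ is finite for every fiber $F$, and I would establish isotriviality of $\pi$ by lifting the moduli map $E_0\to\mathcal{M}_{g'}$ to a holomorphic map $\C{}\to T_{g'}$ into Teichm\"uller space; since by Bers $T_{g'}$ embeds as a bounded domain in $\C{3g'-3}$, Liouville forces the lift to be constant. Isotriviality together with finiteness of the structure group then trivializes the bundle after a finite \'etale cover, producing $\hat S=F\times\hat E_0$, which we view as a principal $\hat E_0$-bundle over $F$ with its trivial flat connection $\eta$; the leaves of $\eta=0$ agree with the fibers of $\hat S\to\hat E_0$, so the foliation $\omega=0$ coincides with $\eta=0$, giving conclusion (2). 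The main obstacle is precisely this higher-genus step, since the paper develops no Teichm\"uller theory and one must import the Bers embedding (or an Arakelov-type rigidity) from outside; everything else is careful bookkeeping combining Brunella's classification with lemma~\ref{lemma:EllipticFibration} and lemma~\ref{lemma:ChernClassZero}.
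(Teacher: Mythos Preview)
Your overall architecture matches the paper's: both run through Brunella's list and use lemma~\ref{lemma:EllipticFibration} and lemma~\ref{lemma:ChernClassZero} to funnel the surviving cases into conclusions (1)--(3). Your handling of the turbulent case by the local computation $\omega=f(z)\,dz+g(z)\,dw$, $g$ constant, is essentially what the paper does when it restricts $\eta$ to each elliptic fiber and finds $\eta|_{S_c}=f\,dw$ with $f$ a global constant.

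The genuine gap is your treatment of Brunella's case~(7). Writing that a transversely hyperbolic foliation is ``incompatible with the transverse translation structure provided by~$\omega$'' is an assertion, not an argument: a foliation is just a line subbundle of $TS$, and nothing you have said prevents it from carrying two different transverse geometric structures simultaneously. The paper devotes real work to this case. It lifts $\eta$ to the universal cover $\tilde S$, which is a disk fibration over a disk; since the fibers are simply connected and $\eta$ is closed and vanishes on leaves, $\eta$ descends to a holomorphic $1$-form on the base disk. The function $|\eta|^2$ taken in the hyperbolic metric of the base is then $\pi_1(S)$-invariant and constant along leaves, hence (because the leaves are dense in case~(7)) constant on $S$. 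One is reduced to showing that the disk carries no closed holomorphic $1$-form of constant nonzero hyperbolic norm, which the paper checks by an explicit computation: writing $\eta=e^{i\alpha}(1-|z|^2)^{-1}\,dz$ and imposing $d\eta=0$ leads to $d^2\alpha\ne 0$, a contradiction. You need either this computation or a clean substitute (for instance, an argument via the degree of the conormal bundle $N^*_F$: it is trivialised by $\omega$, yet a transversely hyperbolic structure forces a metric of strictly negative curvature on $N_F$, which is impossible on a trivial line bundle over a compact surface; but you would have to state and justify this carefully).

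A smaller point: for fibers of genus $g'\ge 2$ you invoke the Bers embedding and Liouville to get isotriviality, which is correct but heavier than needed. The paper simply cites \cite{Barth/Hulek/Peters/VanDeVen:2004}, p.~149, for the fact that after a finite cover such a bundle over an elliptic base splits as $E\times C_{g'\ge 2}$; this then falls under conclusion~(2) exactly as you say, by viewing the product as a trivial principal $E$-bundle over $C_{g'\ge 2}$ with its obvious flat connection.
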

\begin{proof}
Suppose that $S$ is a compact complex
surface with nowhere vanishing closed
holomorphic 1-form $\eta$, and consider
the foliation $\eta=0$.
Our foliation must be on Brunella's list
in lemma~\vref{lemma:Brunella}.

Every holomorphic 1-form on a complex
torus is translation invariant, and therefore
any foliation of the torus with transverse
translation structure must be a linear foliation.
So we can assume that $S$ is not a torus.

If $S$ is a fibration, and the foliation 
coincides with the fibration, then the 1-form is 
semibasic and so basic, and therefore arises
from a nowhere vanishing 1-form on the base,
so the base is an elliptic curve. 
The fibration must be a smooth fiber bundle,
by lemma~\vref{lemma:EllipticFibration}.
If the fibers are rational, then
$S$ contains a rational curve, contradicting
our hypotheses.
If the fibers are elliptic curves, then
the classification of elliptic
curve bundles over an elliptic curve base 
(see \cite{Barth/Hulek/Peters/VanDeVen:2004} p. 146)
tells us that (after perhaps replacing $S$ by
a finite covering space)
$S$ is a torus (and therefore
the foliation is linear) or a primary Kodaira
surface. If the fibers are curves of genus $g \ge 2$,
then replacing $S$ by a finite covering space,
we can arrange that $S=E \times C_{g \ge 2}$,
and the fibration is the trivial product fibration
(see \cite{Barth/Hulek/Peters/VanDeVen:2004} p. 149). 

So from now on we can suppose that the foliation does not
coincide with a fibration.
Suppose that $S$ is an elliptic fiber bundle,
$E \to S \to C$, 
and the holomorphic nowhere vanishing
1-form is $\eta$. On each fiber $S_c \cong E$,
for any $c \in C$, we have a distinguished Maurer--Cartan
1-form, say $dw$, for $w$ any affine coordinate
on the universal covering space of $E$.
On $S_c$, 
\[
\left.\eta\right|_{S_c} = f \, dw,
\]
for some holomorphic function \map[f]{S}{\C{}}.
But then $f$ must be constant. If $f=0$,
then the foliation
coincides with the fiber bundle, a
case already covered. So 
without loss of generality, 
\[
\left.\eta\right|_{S_c} = dw,
\]
over every point $c \in C$ and the
foliation is transverse. Apply 
lemma~\vref{lemma:ChernClassZero}
to find that $S$ is, after replacement by a finite
covering space, a holomorphic
elliptic fiber bundle with 
holomorphic flat connection $\eta$
and, without loss of generality,
the foliation is $\eta=0$.

No Hopf surface carries any nonzero holomorphic
1-form, nor does any Inoue surface
(see \cite{Barth/Hulek/Peters/VanDeVen:2004} p. 172 
for the proof for linear Hopf surfaces, and p. 176
for the proof for Inoue surfaces. The argument
on p. 172, without alteration, proves the 
result also for nonlinear Hopf surfaces.)

Suppose that $S$ has a transversely hyperbolic foliation with
dense leaves, and $\tilde{S} \to S$, the universal covering
space, is a holomorphic disk fibration over a disk:
\[
\BundleCommutativeDiagram{\disk}{\tilde{S}}{\disk}{}{}.
\]
The holomorphic 1-form on $S$, say $\eta$, must lift to
$\tilde{S}$ to be closed and 
vanish precisely on the leaves of the foliation.
The fibers are simply connected, so there is no monodromy
and $\eta$ must be pulled back from a 1-form
on the base disk, which we also call $\eta$. The
group $\fundgp{S}$ must preserve the form $\eta$
and the fibration, and therefore must act on the
base disk.
Let $\pi=\fundgp{S}$.
The function $\left|\eta\right|^2$ (using the hyperbolic
metric on the base disk) is a $\pi$-invariant 
smooth function on the base disk. It lifts to a $\pi$-invariant 
smooth function on $\tilde{S}$, constant on each leaf.
Therefore it descends to a smooth function on $S$, constant 
on each leaf. But this function must therefore be constant.
So $\eta$ has constant norm in the hyperbolic metric
on the base disk.
The hyperbolic metric is
\[
ds^2 = \frac{\left|dz\right|^2}{\left(1-|z|^2\right)^2},
\]
so $\eta$ must have the form
\[
\eta = \frac{e^{i \alpha}}{1-|z|^2} dz,
\]
(after perhaps replacing $\eta$ by a real constant
rescaling) 
for some real valued smooth function $\alpha$.
Since $\eta$ is closed, we can take exterior derivative to find
\begin{align*}
0 &= \pd{}{\bar{z}} \frac{e^{i\alpha}}{1-|z|^2}, \\
&= \frac{e^{i\alpha}}{\left(1-|z|^2\right)^2}
\left(
    i\left(1-|z|^2\right) \pd{\alpha}{\bar{z}} + z 
\right).
\end{align*}
Therefore
\[
\pd{\alpha}{\bar{z}} = \frac{iz}{1-|z|^2}.
\]
Since $\alpha$ is real, 
\[
\pd{\alpha}{z} = -\frac{i\bar{z}}{1-|z|^2}.
\]
But then
\[
d \alpha = \frac{i}{1-|z|^2} \left( \bar{z} \, dz -\bar{z} \, d\bar{z}\right).
\]
Taking exterior derivative, 
\[
d^2 \alpha = 
\frac{2i}{\left(1-|z|^2\right)^2} dz \wedge d\bar{z}.
\]
But $d^2=0$, so a contradiction.
Therefore there is no holomorphic 1-form
of constant nonzero norm
on the disk. Therefore there is no foliation on $S$ 
with holomorphic transverse translation structure.
\end{proof}

\section{Nothing else}

\begin{lemma}\label{lemma:noRationalCurves}
Let $S$ be a compact complex surface bearing a 
$\left(G_D,\C{2}\right)$-structure or a 
$\left(G'_D,\C{2}\right)$-structure. 
Then $S$ contains no rational curves.
\end{lemma}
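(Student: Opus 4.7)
The plan is to exploit the fact that both $G_D$ and $G'_D$ act on $\C{2}$ preserving the holomorphic 1-form $dz$ exactly (not merely up to scalar), as recorded in section~\ref{subsec:GD}. Consequently $dz$ descends from the model $\C{2}$ to a globally defined holomorphic 1-form on $S$, and the leaves $\{z = \text{constant}\}$ of the $G_D$- (or $G'_D$-) invariant foliation descend to a global holomorphic foliation on $S$.

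I would assume for contradiction that $S$ contains a rational curve; this gives a non-constant holomorphic map $\nu \colon \Proj{1} \to S$ (either a parametrization, or the normalization composed with the inclusion into $S$). First I would lift $\nu$ through the universal covering of $S$, using the simple connectedness of $\Proj{1}$, to obtain $\tilde\nu \colon \Proj{1} \to \tilde{S}$, and then compose with the developing map to form $\delta \circ \tilde\nu \colon \Proj{1} \to \C{2}$. The pullback $\tilde\nu^* \delta^* dz$ is a holomorphic 1-form on $\Proj{1}$, hence identically zero, so the first coordinate function $z \circ \delta \circ \tilde\nu$ on $\Proj{1}$ has vanishing differential and is therefore some constant $c \in \C{}$. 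The image of $\delta \circ \tilde\nu$ then lies in the leaf $\{z = c\} \subset \C{2}$, which as a complex manifold is just $\C{}$ with coordinate $w$.

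Next I would use the fact that any holomorphic map $\Proj{1} \to \C{}$ is constant, so $\delta \circ \tilde\nu$ is constant. Since $\delta$ is a local biholomorphism, this forces $\tilde\nu$ to be locally constant, hence constant by connectedness of $\Proj{1}$, and therefore $\nu$ is constant as well, a contradiction. I do not foresee any serious obstacle: the only point requiring attention is the descent of $dz$ to $S$, which is immediate because the explicit $G_D$- and $G'_D$-actions fix $dz$ on the nose; the rest is the standard rigidity of holomorphic maps out of $\Proj{1}$ into an affine target. Because the argument uses only the invariant 1-form $dz$, the same proof handles $G_D$ and $G'_D$ uniformly.
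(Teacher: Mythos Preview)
Your argument is correct, and it is genuinely different from the paper's proof. The paper invokes an external classification result (from \cite{McKay:2011}) listing all possible model homogeneous spaces $\left(G,X\right)$ for holomorphic Cartan geometries on compact complex surfaces containing a rational curve, and then checks that neither $\LieG_D$ nor $\LieG'_D$ contains a copy of $\LieSL{2,\C{}}$, ruling out $G_D$ and $G'_D$ from that list. Your route is far more elementary and self-contained: you exploit only that the model space is $\C{2}$ and that $\Proj{1}$ is simply connected, so a rational curve develops to a holomorphic map $\Proj{1}\to\C{2}$, which is forced to be constant. In fact your detour through the invariant $1$-form $dz$ is unnecessary: once you have $\delta\circ\tilde\nu\colon\Proj{1}\to\C{2}$, both coordinate functions $z$ and $w$ are already holomorphic maps $\Proj{1}\to\C{}$ and hence constant, so you could conclude immediately without first restricting to a leaf. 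Either way, your argument works uniformly for any $\left(G,X\right)$-structure with $X=\C{n}$ (or more generally $X$ Stein), whereas the paper's approach, though heavier here, would in principle handle models that are not affine.
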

\begin{proof}
Suppose that $S$ contains a rational curve.
The complete classification of all
holomorphic Cartan geometries on any
compact complex surface containing a
rational curve appears
in \cite{McKay:2011}. 
The connected complex homogeneous
spaces $\left(G,X\right)$ 
which appear as model spaces for those geometries
(with $G$ a connected complex 
Lie group acting holomorphically and 
effectively) are of three distinct types:
\begin{enumerate}
\item
$\left(\PSL{3,\C{}},\Proj{2}\right)$,
\item
$\left(\left(\Z{}/2\Z{}\right) \ltimes
\left(
\PSL{2,\C{}} \times \PSL{2,\C{}}
\right), \Proj{1} \times \Proj{1}\right)$,
\item
products $\left(G_0 \times \PSL{2,\C{}}, X_0 \times \Proj{1}\right)$.
\end{enumerate}
We want to claim that
$\left(G_D,\C{2}\right)$ and 
$\left(G'_D,\C{2}\right)$ do
not appear in that classification,
for any divisor $D$. Clearly
$G_D$ and $G'_D$ are connected,
and have normal subgroup $V_D$,
so not simple. Therefore we only
have to prove that $G_D$ and
$G'_D$ are not of the form
$G_0 \times \PSL{2,\C{}}$.
The Lie algebras have
\[
\left[\LieG_D, \LieG_D\right]=V_D, \ \left[V_D,V_D\right]=0,
\]
and similarly
\[
\left[\LieG'_D, \LieG'_D\right]=V_D, \ \left[V_D,V_D\right]=0,
\]
so that neither $\LieG_D$ nor $\LieG'_D$ contain
any copy of $\LieSL{2,\C{}}$.
\end{proof}

We prove theorem~\vref{1.5}.
\begin{proof} 
By lemma~\vref{lemma:noRationalCurves}, 
we can assume that $S$ contains no rational curves.
The surface $S$ inherits a nowhere vanishing
holomorphic closed 1-form $\omega$ (coming from $dz$),
and this 1-form is a transverse translation structure. 
The classification given in 
lemma~\vref{lemma:TransverseTranslation}
of those foliations shows that, up to replacement
by a finite covering space, $S$ is either 
\begin{enumerate}
\item 
a complex torus with linear foliation, or
\item 
a holomorphic prinicipal bundle of 
elliptic curves $E \to S \to C$
with a holomorphic flat connection, or
\item
$S$ is a primary Kodaira surface, i.e.
the total space of an elliptic fiber bundle
over an elliptic curve base, and the bundle 
has nonzero first Chern class, and $S$ has
odd first Betti number; the foliation is the
fibration.
\end{enumerate}

Suppose that $S \to C$ is a principal 
elliptic curve bundle with holomorphic
flat connection $\eta$.
We only need to prove that $C$ is an elliptic curve,
so that $S$ is a complex torus or primary Kodaira
surface.
Suppose that $S$ has a holomorphic 
$\left(G_D,\C{2}\right)$-structure.
But $G_D$ preserves a holomorphic
volume form on $\C{2}$, so $S$ is equipped
with a holomorphic volume form, which easily
implies that $C$ is an elliptic curve and $S$ is a complex torus.

Therefore we can assume that $S$ has a 
$\left(G'_D,\C{2}\right)$-structure.
If $C$ has genus $0$, then $S$ contains a rational
curve, contradicting our assumptions.
If $C$ has genus $1$, then $S$ is a
complex torus and
the relevant structures are
classified in sections~\ref{section:GDtori} and~\vref{section:GDprimeOntori}.
Therefore we can assume that $C$ has genus at least $2$.

The group $G'_D$ acts on $\C{2}$ preserving 
the holomorphic 2-form $dz \wedge dw$ up
to constant rescaling, so preserving a 
holomorphic connection on the canonical
bundle of $S$. Therefore the canonical
bundle has vanishing Atiyah class \cite{Atiyah:1957}, and
if $S$ is K\"ahler then $c_1(S)=0$.
By lemma~\vref{lemma:EllipticFibration} 
the bundle $S \to C$ is topologically
trivial: $c_1(S)=c_1(E) + c_1(C)=c_1(C) \ne 0$.
Therefore $S$ has no $\left(G'_D,\C{2}\right)$-structure.
\end{proof}

\section{Conclusion}

There are no interesting examples
of geometric structures on
compact complex surfaces modelled
on Lie's exotic surfaces. The discovery
that this is so is
an essential step in the classification
of holomorphic geometric structures
on low dimensional compact complex manifolds.
Complex tori have obvious translation
structures, while Kodaira surfaces
have similar canonical geometric
structures, and all of the exotic
geometric structures on compact 
complex surfaces are induced
from these more elementary structures.
Therefore we can ignore the exotic
surfaces of Lie in the search for 
locally homogeneous holomorphic geometric structures
on compact complex surfaces. 

The
main difficulty in classifying locally
homogeneous structures is that the
model $X$ might be very flexible, i.e.
the group $G$ might be very large, and so
it becomes difficult to see the constraints
on putting together a $(G,X)$-structure
on a given surface. In particular,
since the groups acting on Lie's exotic surfaces
are of arbitrarily large dimension, the exotic
surfaces present an exceptionally difficult
case study, which we had to face before
we can develop the general theory of 
holomorphic geometric structures on
compact complex surfaces.

\bibliographystyle{amsplain}
\bibliography{toriExoticGeometry}

\end{document}